\newtheorem{remark}{Remark}
\newtheorem{example}{Example}
\newtheorem{definition}{Definition}
\newtheorem{proposition}{Proposition}
\newtheorem{lemma}{Lemma}
\newtheorem{theorem}{Theorem}
\newtheorem{corollary}{Corollary}
\newcommand{\R}{\mathbb{R}}
\newcommand{\eqcl}[1]{\langle #1 \rangle}
\newcommand{\Ra}{\mathcal{R}}
\newcommand{\Nu}{\mathcal{N}} 
\newcommand{\RWP}{$\Ra$-WP}
\title{Classification of ill-posedness for bounded linear operators in Banach spaces}
\author{Bernd Hofmann\footnotemark[1] \and Stefan Kindermann\footnotemark[2]}
\begin{document}

\maketitle

\renewcommand{\thefootnote}{\fnsymbol{footnote}}
\footnotetext[1]{Chemnitz University of Technology, Faculty of Mathematics, 09107 Chemnitz, Germany.\\ Email: hofmannb@mathematik.tu-chemnitz.de}
\footnotetext[2]{Industrial Mathematics Institute, Johannes Kepler University Linz, Alternbergergstraße~69, 4040 Linz, Austria. Email: kindermann@indmath.uni-linz.ac.at}
\newcounter{enumcount}
\newcounter{enumcountroman}
\renewcommand{\theenumcount}{(\alph{enumcount})}
\bibliographystyle{plain}

\begin{abstract}
In this article, concepts of well- and 
ill-posedness for linear operators 
in Hilbert and Banach spaces are discussed. While these concepts are well understood in  Hilbert spaces, 
this is not the case in Banach spaces, as there are 
several competing definitions,   related 
to the occurrence of  uncomplemented subspaces. 
We provide an overview of the various definitions
and, based on this, discuss the classification 
of type I and type II ill-posedness in Banach spaces. 
Furthermore, a discussion of borderline 
(hybrid) cases in this classification is given 
together with several example instances of operators. 
\end{abstract}

\bigskip

{\parindent0em {\bf MSC2020:}}
47A52, 47B01, 47B02, 65J20

\bigskip

{\parindent0em {\bf Keywords:}}
Linear ill-posed operator equation, injective bounded linear operator, ill-posedness types, compact and non-compact operators, strictly singular operators
\bigskip

\section{Introduction}

Of Hadamard's three requirements for the \emph{well-posedness} of a model problem, namely (i) existence, (ii) uniqueness, and (iii) stability of the solution (cf.~\cite{Hadamard23}), the third demand (iii) is of particular interest for the model of a linear operator equation 
\begin{equation}\label{eq:opeq}
A x =y  \quad (x \in X,\; y \in Y),
\end{equation}
formulated in \emph{infinite-dimensional} abstract spaces $X$ and $Y$ (Hilbert or Banach spaces) with some bounded linear forward operator $A: X \to Y$. If at least one of the three requirements fails, Hadamard devalues the problems as \emph{ill-posed}. However, for such model problems \eqref{eq:opeq},
the requirement (i), which means surjectivity of $A$, and the requirement (ii), which means injectvity of $A$,  play a less important role for the practical handling, because exact data $y$
belonging to the range $\mathcal{R}(A)$ of $A$ ensure solution existence, and in case of non-trivial null-spaces $\mathcal{N}(A)$ of $A$
uniquely determined elements like minimum-norm solutions lead to the required uniqueness. Moreover, developed mathematical techniques
cover the case of multiple solutions. On the other hand, the stability requirement (iii) is  indispensable for well-posedness, and its failure cannot be overcome by simple technical tricks. 

Here, instability in the context of the violation of Hadamard's requirement (iii) means that small changes in
the data (right-hand side of equation \eqref{eq:opeq}) can lead to arbitrarily large changes in the solution. For a stability concept in the case of multiple solutions,
we refer to Remark~\ref{rem:quasidistance} in Section~\ref{sec:alternative} below. For linear operator equations, 
stability and instability can be expressed by the fact that the inverses $A^{-1}: \mathcal{R}(A) \subset Y \to X$ for injective operators $A$ (and their generalizations for non-injective $A$---see Section~\ref{sec:Banach2}) are \emph{bounded} and \emph{unbounded} linear operators,
respectively.

The stability aspect has been the substantial reason for M.~Z.~Nashed (cf.~\cite{Nashed86}) to characterize well-posedness and ill-posedness of linear operator equations with focus only on the stability requirement (iii).
Justified by regularizing properties, Nashed has furthermore distinguished in \cite{Nashed86} the type~I of ill-posedness from the 
type~II of ill-posedness. First we present a simplified (naive) version of Nashed's definition as Definition~\ref{def:type}, which is completely satisfactory for $X$ and $Y$ infinite-dimensional \emph{Hilbert spaces} as well as for \emph{injective} bounded linear operators $A$ when $X$ or $Y$ or both are infinite-dimensional Banach spaces. Definition~\ref{def:general} in Section~\ref{sec:Banach2}  presents a
refinement to do justice to the general \emph{Banach space}, where the null-space $\mathcal{N}(A)$ is not necessarily \emph{complemented} in the Banach space $X$. This may occur when $X$ is not isomorphic to a Hilbert space. 
Note that we will write \emph{$\Ra$-well-posed} and \emph{$\Ra$-ill-posed} if we refer to Definition~\ref{def:type} in order to distinguish it from \emph{well-posed} and \emph{ill-posed} of the concept by Definition~\ref{def:general} below.

\begin{definition}[Naive well-posedness and ill-posedness characterization] \label{def:type} $\qquad$
Let $A: X \to Y$ be a bounded linear operator mapping between the 
infinite-dimensional Banach spaces $X$ and $Y$.  

Then the operator equation \eqref{eq:opeq} is called \emph{well-posed} (or \emph{$\Ra$-well-posed})
\[ \text{if the range $\mathcal{R}(A)$ of $A$ is a closed subset of $Y$.} \]
Consequently \eqref{eq:opeq} is called ill-posed (or \emph{$\Ra$-ill-posed})
\[ \text{if the range $\mathcal{R}(A)$ is not closed, i.e., $\mathcal{R}(A) \not= \overline{\mathcal{R}(A)}^{Y}$.} \]
In the ill-posed case, the equation (\ref{eq:opeq}) is called \emph{ill-posed of type~I}
(or \emph{$\Ra$-ill-posed of type~I})
\[ \text{ if the range $\mathcal{R}(A)$ contains an {\sl infinite-dimensional closed subspace},}\]
and it is called ill-posed of type~II (or \emph{$\Ra$-ill-posed of type~II}) otherwise.
\end{definition}




For $X$ and $Y$ being both \emph{Hilbert spaces}, we illustrate by Figure~\ref{fig:Hilbert} in Section~\ref{sec:Hilbert}, which is analog to \cite[Fig.~1]{KindHof24}, this case distinction with respect to well-posedness and the different types of ill-posedness for bounded linear operators $A$ in the Hilbert space setting. As can be seen in the sequel, the situation in the Banach space setting is a bit more complex, because in Banach spaces non-compact operators can occur  that also lead to ill-posendness of type~II. In Section~\ref{sec:Banach1}, we will explain this in detail by Figure~\ref{fig:Banach1}, which we recall from \cite{FHV15}. Both figures correspond to Definition~\ref{def:type}. Illustrated by Figure~\ref{fig:Banach2}, Section~\ref{sec:Banach2} outlines the general Banach space situation, where uncomplemented null-spaces may occur that find its place in the Definition~\ref{def:general} along the lines of Nashed's seminal publication \cite{Nashed86} for the characterization of well-posedness and ill-posedness in the general case.

We precede the concept of \emph{strictly singular} operators by Definition~\ref{def:sso}, which is needed in the following sections, and we also
give in this context a list of well-known assertions that characterize strict singularity. 

\begin{definition}[Strictly singular operator (cf.~\cite{GoldThorp63})] \label{def:sso}
Let $X$ and $Y$ be Banach spaces, and let $A$ be a bounded
linear operator mapping $X$ into $Y$. Then $A$ is said to be \emph{strictly singular} if,
given any infinite-dimensional subspace $M$ of $X$, $A$ restricted to $M$ is
not an isomorphism (i.e., linear homeomorphism).
\end{definition}

\begin{proposition}[\mbox{\cite{Kato58}}] \label{rem:Kato}
A bounded linear operator $A:X \to Y$ mapping between Banach spaces is strictly singular if the closed subspaces $Z$ of $X$, for which the restriction $A|_{Z}$ has a bounded inverse, are necessarily
finite dimensional.
\end{proposition}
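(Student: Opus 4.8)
The plan is to establish strict singularity from the stated closed--subspace condition by contraposition. Assume $A$ is \emph{not} strictly singular; then, by Definition~\ref{def:sso}, there exists an infinite-dimensional subspace $M \subseteq X$ such that the restriction $A|_{M}\colon M \to A(M)$ is a linear homeomorphism onto its image. The goal is to produce a \emph{closed} infinite-dimensional subspace witnessing the same property, thereby contradicting the hypothesis.

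First I would translate "linear homeomorphism onto its image'' into a lower bound. Since $A|_{M}$ is injective with bounded inverse $(A|_{M})^{-1}\colon A(M)\to M$, there is a constant $c>0$ with $\|Ax\|_{Y} \ge c\,\|x\|_{X}$ for all $x \in M$; here I only use the elementary fact that a bounded linear bijection between normed spaces has a bounded inverse if and only if it is bounded below.

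The key step is to pass to the closure $Z := \overline{M}^{X}$. For $x \in Z$ pick $x_{n}\in M$ with $x_{n}\to x$; applying the inequality to each $x_{n}$ and letting $n\to\infty$, using continuity of $A$ and of the norms, gives $\|Ax\|_{Y} \ge c\,\|x\|_{X}$ for all $x \in Z$. Hence $A|_{Z}$ is bounded below, so it is injective and its inverse $A(Z)\to Z$ is bounded (by $1/c$); i.e., $A|_{Z}$ has a bounded inverse. Since $Z$ is a closed subspace of $X$ containing the infinite-dimensional subspace $M$, it is itself infinite-dimensional.

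This contradicts the assumption that every closed subspace $Z$ of $X$ on which $A|_{Z}$ has a bounded inverse must be finite-dimensional, and therefore $A$ is strictly singular. I do not expect a real obstacle here: the only point needing care is the extension of the lower bound from $M$ to $\overline{M}$, which preserves the "bounded inverse'' property while replacing a possibly non-closed witnessing subspace by a closed one — and this is exactly the content of the proposition, namely that restricting attention to closed subspaces loses nothing.
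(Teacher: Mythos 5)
Your argument is correct: the paper states this result as a citation to Kato without giving a proof, and your contrapositive argument --- take an infinite-dimensional (not necessarily closed) subspace $M$ on which $A$ is an isomorphism onto its image, express this as the lower bound $\|Ax\|_Y \ge c\|x\|_X$ on $M$, and extend that bound to $Z=\overline{M}$ by continuity of $A$ and of the norm --- is exactly the standard way to see that one may restrict attention to closed subspaces in the definition of strict singularity. No gaps: the only delicate step is the passage from $M$ to $\overline{M}$, and you handle it correctly.
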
   
Evidently, all compact operators are strictly singular.
In Hilbert spaces, the converse is true as well:
\begin{proposition}[\mbox{\cite[Remark p.~287f]{Kato58}}]
Let $A: X\to Y$ be a bounded operator mapping between the Hilbert spaces $X$ and $Y$. 
Then $A$ is strictly singular if and only if $A$ is compact.
\end{proposition}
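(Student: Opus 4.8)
The plan is to argue the two implications separately. The direction ``$A$ compact $\Rightarrow$ $A$ strictly singular'' is already recorded above (it holds in arbitrary Banach spaces, since on an infinite-dimensional subspace an isomorphism onto a closed range would make the identity of that subspace compact). Hence the only thing to prove is ``$A$ strictly singular $\Rightarrow$ $A$ compact'', and I would establish it by contraposition: assuming that $A$ is \emph{not} compact, I would exhibit an infinite-dimensional closed subspace $M \subseteq X$ on which $A$ is bounded below. By the bounded inverse theorem, $A|_M$ is then a linear homeomorphism of $M$ onto $A(M)$, contradicting Definition~\ref{def:sso} (equivalently, contradicting the criterion of Proposition~\ref{rem:Kato}).

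To build such an $M$, I would use crucially that both $X$ and $Y$ are Hilbert spaces and pass to the positive, self-adjoint operator $T := A^{*}A$ on $X$, for which $\langle Tx,x\rangle = \|Ax\|^{2}$ for all $x \in X$. Since $T$ is compact if and only if $A$ is compact, the operator $T$ is not compact. I would then invoke the spectral characterization of compactness for a positive self-adjoint operator, namely that $T$ is compact precisely when the spectral projection $E_T(\delta,\infty)$ has finite rank for every $\delta > 0$. Applying its contrapositive yields some $\varepsilon > 0$ for which $M := \mathcal{R}\big(E_T[\varepsilon,\infty)\big)$ is an infinite-dimensional (and automatically closed) subspace of $X$.

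It then remains to verify that $A$ is bounded below on $M$. For $x \in M$ the functional calculus gives
\[
\|Ax\|^{2} \;=\; \langle Tx,x\rangle \;=\; \int_{[\varepsilon,\infty)} \lambda\, d\langle E_T(\lambda)x,x\rangle \;\ge\; \varepsilon\,\|x\|^{2},
\]
so $A|_M$ is injective and has closed range $A(M)$, hence is an isomorphism of $M$ onto $A(M)$. This contradicts the strict singularity of $A$ and completes the nontrivial direction.

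I expect the only substantive point to be the spectral-theoretic step: that a non-compact positive self-adjoint operator necessarily admits such an $\varepsilon$ with $E_T[\varepsilon,\infty)$ of infinite rank. (A more hands-on alternative would avoid $A^{*}A$ altogether: a non-compact $A$ sends some orthonormal sequence $(e_n)$ to a weakly null sequence bounded away from $0$ in norm, and a gliding-hump argument extracts a subsequence $(e_{n_k})$ whose images are almost orthogonal, so that $A$ is bounded below on $\overline{\mathrm{span}}\{e_{n_k}\}$; but the $A^{*}A$ route is cleaner.) The remaining implications — ``bounded below $\Rightarrow$ closed range and isomorphism'' and ``$T$ compact iff $A$ compact'' — are routine.
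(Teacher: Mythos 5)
Your argument is correct and complete. Note that the paper offers no proof of this proposition at all---it is stated purely as a citation to Kato's remark---so there is nothing in the text to compare against; what you have written is a genuine, self-contained proof of the nontrivial direction. The reduction to $T=A^{*}A$ is sound: $T$ is compact iff $A$ is (since $\|A(x_n-x)\|^{2}=\langle T(x_n-x),x_n-x\rangle$ for a bounded weakly convergent sequence), the spectral characterization of compactness for a positive self-adjoint operator gives an $\varepsilon>0$ with $E_T[\varepsilon,\infty)$ of infinite rank, and the functional-calculus estimate $\|Ax\|^{2}=\langle Tx,x\rangle\ge\varepsilon\|x\|^{2}$ on $M=\mathcal{R}\bigl(E_T[\varepsilon,\infty)\bigr)$ shows $A|_{M}$ is bounded below, hence has closed range and is an isomorphism onto $A(M)$, contradicting Definition~\ref{def:sso} (equivalently the criterion of Proposition~\ref{rem:Kato}). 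The converse direction is the standard Riesz-lemma argument the paper already records as ``evident.'' Two cosmetic remarks: the bounded inverse theorem is not needed once you have the explicit lower bound $\|Ax\|\ge\sqrt{\varepsilon}\,\|x\|$, since the inverse is then Lipschitz by hand; and your parenthetical ``gliding hump'' alternative is indeed the route closer to Kato's original Banach-space-flavoured reasoning, whereas the $A^{*}A$ route exploits the Hilbert structure more directly---either is acceptable here.
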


\begin{lemma} \label{lem:finite}
If the range $\Ra(A)$ of the bounded linear operator $A: X \to Y$ mapping between infinite-dimensional Banach spaces is finite dimensional, then $A$ is strictly singular and compact with closed range. 

Conversely, let $A$  be strictly singular and possess a closed range $\Ra(A)$.
Then $\Ra(A)$ is finite dimensional and $A$ is also compact whenever either
\begin{itemize}
\item $X$ and $Y$ are Hilbert spaces 
\item or $A$ is an injective map between Banach spaces $X$ and $Y$. 
\end{itemize}
\end{lemma}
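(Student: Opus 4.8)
The plan is to prove the two directions separately, with the open mapping theorem doing the work in the converse.

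\emph{Forward direction.} Assume $\dim\Ra(A)<\infty$. Being a finite-dimensional linear subspace, $\Ra(A)$ is automatically closed in $Y$. Since $A$ is continuous, it carries bounded subsets of $X$ into bounded subsets of the finite-dimensional space $\Ra(A)$, and these are relatively compact by Heine--Borel; hence $A$ is compact. Strict singularity then follows from the observation above that every compact operator is strictly singular --- or directly, since for any infinite-dimensional subspace $M\subseteq X$ one has $\dim A(M)\le\dim\Ra(A)<\dim M$, so $A|_M$ has nontrivial kernel and cannot be an isomorphism.

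\emph{Converse direction.} Let $A$ be strictly singular with $\Ra(A)=\overline{\Ra(A)}^{Y}$, and set $N:=\Nu(A)$, a closed subspace since $A$ is bounded. The mechanism I would use is: if $Z\subseteq X$ is a closed subspace with $Z\cap N=\{0\}$ and $A(Z)=\Ra(A)$, then $A|_Z:Z\to\Ra(A)$ is a bounded linear bijection between Banach spaces ($Z$ is closed in $X$ and $\Ra(A)$ is closed in $Y$, hence both complete), so by the open mapping theorem $A|_Z$ is an isomorphism onto $\Ra(A)$; strict singularity (Definition~\ref{def:sso}) then forces $\dim Z<\infty$, whence $\dim\Ra(A)=\dim Z<\infty$, and the first part of the lemma yields compactness of $A$. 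It remains to produce such a $Z$. In the Hilbert space case take $Z:=N^{\perp}$: it is closed, meets $N$ only in $0$, and $A(N^{\perp})=\Ra(A)$ because $A$ annihilates the orthogonal component lying in $N$. If instead $A$ is injective, take $Z:=X$; in this case the argument in fact shows $X$ would have to be finite-dimensional, which is excluded, so an injective strictly singular operator with closed range between infinite-dimensional Banach spaces cannot exist and the asserted conclusion holds vacuously.

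The main obstacle --- and the reason the converse is stated only under these two additional hypotheses --- is the construction of $Z$: in a general Banach space the null-space $N$ need not be complemented, so one cannot in general peel off a closed subspace of $X$ that $A$ maps isomorphically onto $\Ra(A)$. The Hilbert structure supplies the orthogonal complement, and injectivity lets $X$ itself serve as $Z$. One could instead factor $A=\widetilde A\circ\pi$ through the quotient $X/N$, with $\widetilde A:X/N\to\Ra(A)$ an isomorphism by the open mapping theorem, but transferring strict singularity from $A$ to the quotient map $\pi$ again hinges on whether $N$ is complemented, so this reformulation does not remove the difficulty.
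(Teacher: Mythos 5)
Your proof is correct, and its overall architecture coincides with the paper's: in both the forward direction is the standard finite-dimensional-range argument, and in the converse both you and the paper isolate a closed subspace on which $A$ is a bounded bijection onto the closed range --- $\Nu(A)^{\perp}$ in the Hilbert case, $X$ itself in the injective case --- and apply the open mapping theorem to obtain a bounded inverse there. Where you differ is in how strict singularity is then exploited: the paper composes $A$ with that inverse, invokes Kato's theorem that the strictly singular operators form an operator ideal to conclude that the resulting identity on $\tilde{X}$ is strictly singular, and then uses that the identity on an infinite-dimensional space is never strictly singular; you instead apply Definition~\ref{def:sso} directly, noting that $A$ restricted to an infinite-dimensional (closed) subspace cannot be an isomorphism, so $Z$ must be finite dimensional. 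Your route is marginally more elementary, since it bypasses the ideal property entirely. You also make explicit a point the paper passes over: in the injective case the argument shows $X$ itself would be finite dimensional, contradicting the standing infinite-dimensionality assumption, so that branch of the converse holds vacuously --- the paper's proof reaches the same conclusion ($\tilde{X}=X$ finite dimensional) but states the consequence for $\Ra(A)$ without remarking on the vacuity.
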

\begin{proof} 
The first assertion follows 
by the definition of strictly 
singular. For the second assertion, consider a strictly 
singular operator with closed 
range. In both cases, if $X,Y$
are Hilbert spaces or 
$A$ is injective we may find
a continuous right inverse 
on $\Ra(A)$: In the Hilbert space case, the pseudo-inverse $A^\dagger$ and in the injective 
Banach space case, this follows 
from the open mapping theorem. 
A classical result of 
Kato~\cite{Kato58} states that 
the strictly singular operators 
form an ideal. Hence the 
composition of $A$ with one of 
the above inverses gives the 
identity on 
$\tilde{X} = \Nu(A)^\bot$ in the Hilbert case or 
 $\tilde{X}= X$  in the injective 
 Banach space case, which must 
 be strictly singular then. 
 It 
 is well-known \cite{Kato58}
 that this can only be the 
 case if $\tilde{X}$
 is finite dimensional, which implies that 
 the range is so as well. 
\end{proof}

\begin{proposition}[\mbox{\cite{GoldThorp63}}]
\label{bpq}
Every bounded linear operator $A: \ell^p \to \ell^q$ with $p,q \in [1,\infty)$ and 
$p \not = q$ is strictly singular.
\end{proposition}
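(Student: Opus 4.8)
The plan is to derive strict singularity of $A$ from two structural facts: (i) every infinite-dimensional closed subspace of $\ell^p$ contains a further infinite-dimensional closed subspace that is isomorphic to $\ell^p$ itself; and (ii) $\ell^p$ admits no isomorphic embedding into $\ell^q$ whenever $p\neq q$. Granting these, the argument is short. If $A$ were not strictly singular, then by the contrapositive of Proposition~\ref{rem:Kato} there would be an infinite-dimensional closed subspace $M\subseteq\ell^p$ on which $A$ has a bounded inverse; choosing inside $M$, by (i), a subspace $N$ isomorphic to $\ell^p$, the restriction $A|_N$ is still bounded below and hence exhibits $A(N)\subseteq\ell^q$ as an isomorphic copy of $\ell^p$, contradicting (ii). Thus all the work lies in proving (i) and (ii).

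For (i) I would use a gliding-hump (Bessaga--Pe\l czy\'nski selection) construction. Given an infinite-dimensional closed $M\subseteq\ell^p$, one inductively picks unit vectors $x_n\in M$ together with a block basic sequence $(u_n)$ of the canonical basis of $\ell^p$ — the $u_n$ supported on pairwise disjoint finite blocks of coordinates — with $\|x_n-u_n\|$ summably small; this is possible because $M$ infinite-dimensional forces, for every finite coordinate set, the existence of a unit vector of $M$ almost supported beyond it (otherwise $M$ would inject continuously into a finite-dimensional coordinate space). Since disjointly supported blocks in $\ell^p$ satisfy $\|\sum_n c_n u_n\|_p^p=\sum_n|c_n|^p\|u_n\|_p^p$, the sequence $(u_n)$ is isometrically equivalent to the canonical basis of $\ell^p$, and the standard small-perturbation principle for basic sequences then shows $(x_n)$ is equivalent to $(u_n)$; hence the closed linear span of $(x_n)$ is a subspace of $M$ isomorphic to $\ell^p$.

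For (ii), the main case is $1<p,q<\infty$ with $p\neq q$. Suppose $T\colon\ell^p\to\ell^q$ is an isomorphic embedding. The vectors $Te_n$ are bounded above and below in norm and, because $e_n$ tends weakly to $0$ in the reflexive space $\ell^p$ and $T$ is weak-to-weak continuous, $Te_n$ tends weakly to $0$ in $\ell^q$. Applying the Bessaga--Pe\l czy\'nski selection principle in $\ell^q$ (after normalizing) yields a subsequence $(Te_{n_k})$ equivalent to the canonical basis of $\ell^q$, while $(e_{n_k})$ is isometrically equivalent to the canonical basis of $\ell^p$; since $T$ maps the latter onto the former, the canonical bases of $\ell^p$ and $\ell^q$ are equivalent. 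Evaluating the resulting two-sided norm inequality on the vectors $(1,\dots,1,0,0,\dots)$ with $n$ ones gives constants $c,C>0$ with $c\,n^{1/q}\le n^{1/p}\le C\,n^{1/q}$ for all $n$, which forces $p=q$, a contradiction. The endpoint cases are cheaper: if $p=1<q$, then $\ell^q$ is reflexive and $\ell^1$ is not, and reflexivity passes to closed subspaces, so no embedding exists; if $q=1<p$, then closed subspaces of $\ell^1$ have the Schur property whereas $\ell^p$ does not (alternatively, Pitt's theorem already forces every operator $\ell^p\to\ell^1$ to be compact, hence strictly singular).

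I expect the main obstacle to be the bookkeeping in the two gliding-hump arguments: choosing the perturbation parameters small enough that the small-perturbation principle genuinely applies and that the constructed basic sequences are equivalent to the model bases, and securing the weak-null property of $(Te_n)$ in the range — which is exactly why $\ell^1$ as a target must be excluded from that argument and handled via reflexivity or the Schur property instead. The remaining ingredients — the reduction through Proposition~\ref{rem:Kato}, the isometry of block bases, and the test on constant vectors — are routine.
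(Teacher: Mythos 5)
Your proof is correct. Note, though, that the paper itself offers no argument for this proposition --- it is stated as a cited result from Goldberg--Thorp --- so there is no in-paper proof to compare against; what you have written is essentially the classical textbook argument, and it is sound. The reduction via Proposition~\ref{rem:Kato} is fine (one microscopic point: the subspace witnessing non-strict-singularity need not be closed a priori, but the lower bound $\|Ax\|\ge c\|x\|$ passes to the closure, so you may assume it is). Your ingredient (i), that every infinite-dimensional closed subspace of $\ell^p$ contains a further subspace isomorphic to $\ell^p$, is the standard gliding-hump/Bessaga--Pe\l czy\'nski construction, and your justification that $M$ must contain almost-disjointly-supported unit vectors (else the finite-coordinate projection would be an isomorphism of $M$ into a finite-dimensional space) is exactly right. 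Ingredient (ii), the non-embeddability of $\ell^p$ into $\ell^q$, is handled correctly in all cases: the reflexive case via weak nullity of $(Te_n)$, a second Bessaga--Pe\l czy\'nski selection, and the $n^{1/p}$ versus $n^{1/q}$ growth test; the endpoints via reflexivity and the Schur property. One remark on economy: the paper's Proposition~\ref{th:pitt} (Pitt's theorem) already disposes of the entire case $p>q$, since compact operators are strictly singular, so your full machinery is only genuinely needed for $p<q$; your unified route covers both, at the cost of the extra bookkeeping you correctly identify as the main labor.
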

In particular, any \emph{embedding operator} $\ell^p \to \ell^q $
with $p<q$ is strictly singular and moreover \emph{non-compact}. 

\smallskip

For the next two propositions, see \cite[Proposition 2.C.3]{LiTz}. 
\begin{proposition}[Pitt's theorem]\label{th:pitt}
Let $p,r \in [1,\infty)$  with $r >p$. 
Then every bounded linear operator from $\ell^r \to \ell^p$ 
is compact. Also any bounded linear map $A:c _0 \to \ell^p$ with $p \in [1,\infty)$ is compact.  
\end{proposition}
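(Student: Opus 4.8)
The plan is to prove the first assertion by a gliding‑hump argument that, for $r>p$, produces a quantitative clash between the $\ell^r$– and $\ell^p$–norms on disjointly supported vectors; the $c_0$–domain case will follow by the same scheme (or, for $p>1$, by a duality shortcut). Write $(e_k)$ for the unit vector basis of the space under discussion. Suppose, for contradiction, that $T\colon\ell^r\to\ell^p$ is bounded but not compact. Then $T$ maps the closed unit ball of $\ell^r$ onto a non‑precompact set, so there are $\delta>0$ and a sequence $(x_n)$ in that unit ball with $\|Tx_n-Tx_m\|_{\ell^p}\ge\delta$ for all $n\neq m$. Passing to a subsequence, I would first arrange that $(x_n)$ converges coordinatewise (each coordinate is bounded; diagonalise), and set $z_j:=x_{n_{j+1}}-x_{n_j}$; then $(z_j)$ is bounded, converges coordinatewise to $0$, and $\|Tz_j\|_{\ell^p}\ge\delta$, whence also $\|z_j\|_{\ell^r}\ge\delta/\|T\|>0$. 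A standard gliding‑hump selection — alternately discarding a small tail of some $z_j$ and a small head of a later one — now yields a further subsequence and a block basic sequence $(u_j)$ of $(e_k)$, supported in successive blocks of coordinates, with $\sum_j\|z_j-u_j\|_{\ell^r}<\infty$. Hence $\|Tu_j-Tz_j\|_{\ell^p}\to0$, so after discarding finitely many indices $\|Tu_j\|_{\ell^p}\ge\delta/2$ and $\|u_j\|_{\ell^r}$ lies in a fixed interval $[c,C]\subset(0,\infty)$; rescaling, I may take each $u_j$ normalised in $\ell^r$.

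Next I would deal with the target space. Since the supports of the $u_j$ march off to infinity, Hölder's inequality gives $\langle u_j,\varphi\rangle\to0$ for every $\varphi$ in the conjugate space $\ell^{r'}$, so $(u_j)$ is weakly null in $\ell^r$, and therefore $(Tu_j)$ is weakly null in $\ell^p$; in particular it is bounded, coordinatewise null, and $\|Tu_j\|_{\ell^p}\ge\delta/2$. If $p=1$ this already contradicts the Schur property of $\ell^1$ (a weakly null sequence in $\ell^1$ is norm null), and we are done. If $p>1$, a second gliding‑hump selection produces a further subsequence and a block basic sequence $(v_j)$, normalised in $\ell^p$, with $\|Tu_j-c_jv_j\|_{\ell^p}\le\varepsilon_j$, where $c_j:=\|Tu_j\|_{\ell^p}\in[\delta/2,\|T\|]$ and $\sum_j\varepsilon_j=:E<\infty$.

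The contradiction is then pure arithmetic. Disjoint supports and normalisation give $\bigl\|\sum_{j\le N}u_j\bigr\|_{\ell^r}=N^{1/r}$ and $\bigl\|\sum_{j\le N}c_jv_j\bigr\|_{\ell^p}=\bigl(\sum_{j\le N}c_j^p\bigr)^{1/p}\ge(\delta/2)\,N^{1/p}$, so that
\[
\|T\|\,N^{1/r}\ \ge\ \Bigl\|T\bigl(\textstyle\sum_{j\le N}u_j\bigr)\Bigr\|_{\ell^p}\ \ge\ \tfrac{\delta}{2}\,N^{1/p}-E\,,
\]
and dividing by $N^{1/r}$ and letting $N\to\infty$ forces $\|T\|=\infty$ because $\tfrac1p-\tfrac1r>0$ — a contradiction, so $T$ is compact. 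For $A\colon c_0\to\ell^p$ the same argument applies verbatim with $\ell^r$ replaced by $c_0$: the coordinatewise‑limit extraction and both gliding humps work identically, the Hölder step uses $\ell^1=c_0^*$, and $\bigl\|\sum_{j\le N}u_j\bigr\|_{c_0}=1$ replaces $N^{1/r}$, which only sharpens the contradiction; alternatively, for $p>1$ one may observe that $A^*\colon\ell^{p'}\to\ell^1$ has reflexive domain mapping into a space with the Schur property, hence is compact, so $A$ is compact by Schauder's theorem.

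The hard part will not be the final estimate but the bookkeeping in the two gliding‑hump selections: one must keep the perturbation errors summable and the block sequences uniformly bounded above and below in norm, since only then are the clean identities $\bigl\|\sum_{j\le N}e_j\bigr\|_{\ell^s}=N^{1/s}$ usable as stated. Everything else — passing to coordinatewise limits, the Hölder/weak‑nullity step, and the invocation of the Schur property and Schauder's theorem — is routine.
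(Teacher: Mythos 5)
Your argument is correct: the paper itself gives no proof of this proposition, citing only \cite[Proposition~2.c.3]{LiTz}, and your gliding-hump proof is essentially the classical argument found there (extract a seminormalized coordinatewise-null sequence from non-compactness, perturb to disjointly supported blocks in domain and range, and play $N^{1/r}$ against $N^{1/p}$; Schur's property disposes of $p=1$, and the sup-norm of a disjoint sum disposes of the $c_0$ case). The only points needing care are the ones you already flag as bookkeeping — keeping the perturbation errors summable and the block norms bounded above and below so the renormalization only changes the constants — and the duality shortcut $A^*\colon \ell^{p'}\to\ell^1$ for $p>1$ is likewise sound.
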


\begin{proposition}
Let $p \in [1,\infty)$. 
Then a bounded linear map $\ell^p\to\ell^p$
is strictly singular if and only if it is
compact. 
\end{proposition}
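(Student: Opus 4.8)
The plan is as follows. The implication ``compact $\Rightarrow$ strictly singular'' is the elementary one already recorded above: the restriction of a compact operator to any infinite-dimensional closed subspace is again compact, and a compact linear homeomorphism onto its image would make the closed unit ball of that subspace compact. For the converse I would argue by contraposition, producing, under the assumption that $A:\ell^p\to\ell^p$ is \emph{not} compact, an infinite-dimensional closed subspace $M\subseteq\ell^p$ on which $A$ is bounded below; then $A|_M$ is a linear homeomorphism onto its range, so $A$ is not strictly singular in the sense of Definition~\ref{def:sso}.

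Since $A$ is not compact, the image of the closed unit ball of $\ell^p$ under $A$ is not totally bounded, so there exist $\delta>0$ and a sequence $(x_n)$ in the unit ball with $\|Ax_n-Ax_m\|\ge\delta$ for all $n\ne m$; discarding at most one term we may also assume $\inf_n\|x_n\|>0$ (a $\delta$-separated set contains at most one point of norm $<\delta/2$, and $\|Ax_n\|\le\|A\|\,\|x_n\|$). The decisive step is to pass to a subsequence along which \emph{both} $(x_n)$ and $(Ax_n)$ behave like the canonical basis of $\ell^p$. For $1<p<\infty$ I would exploit reflexivity: extract a weakly convergent subsequence, subtract its weak limit, and renormalise to obtain a normalised weakly null sequence $(z_n)$ with $\|Az_n\|\ge c>0$; the Bessaga--Pe\l czy\'nski selection principle, together with the fact that a normalised block basic sequence of the canonical basis of $\ell^p$ is isometrically equivalent to it and with the small-perturbation principle, then yields a subsequence of $(z_n)$ that is uniformly equivalent to the $\ell^p$-basis, and applying the same reasoning to the semi-normalised weakly null sequence $(Az_n)$ (weakly null because $A$ is weak-to-weak continuous) shows, after one more passage to a subsequence, that $(Az_n)$ is uniformly equivalent to the $\ell^p$-basis as well. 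For $p=1$, where reflexivity fails, I would instead invoke Rosenthal's $\ell^1$-theorem: a subsequence of $(x_n)$ is either weakly Cauchy or equivalent to the $\ell^1$-basis, and the first case is impossible because $\ell^1$ is weakly sequentially complete and has the Schur property, so a weakly Cauchy sequence converges in norm and then $(Ax_n)$ would be norm-Cauchy, contradicting $\|Ax_n-Ax_m\|\ge\delta$; the same dichotomy applied to $(Ax_n)$ rules out the weakly Cauchy case again and shows that $(Ax_n)$ is equivalent to the $\ell^1$-basis too.

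In both cases one obtains a subsequence, still denoted $(x_n)$, such that $(x_n)$ and $(Ax_n)$ are equivalent to the canonical basis of $\ell^p$ with uniform constants: $\|\sum_n c_nx_n\|\le b\,(\sum_n|c_n|^p)^{1/p}$ and $\|\sum_n c_n Ax_n\|\ge a\,(\sum_n|c_n|^p)^{1/p}$ for some $a,b>0$. Setting $M:=\overline{\operatorname{span}}\{x_n:n\in\N\}$, an infinite-dimensional subspace isomorphic to $\ell^p$, we get for every $x=\sum_n c_nx_n\in M$
\[
\|Ax\|=\Bigl\|\sum_n c_n Ax_n\Bigr\|\ \ge\ a\Bigl(\sum_n|c_n|^p\Bigr)^{1/p}\ \ge\ \frac ab\,\|x\|,
\]
so $A|_M$ is bounded below and hence a linear homeomorphism onto its range, contradicting strict singularity.

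I expect the only real work to be in the passage to the ``good'' subsequence, which is precisely where the special geometry of $\ell^p$ enters and why the equivalence fails for general Banach spaces: for $1<p<\infty$ it relies on the Bessaga--Pe\l czy\'nski selection principle, the perfect homogeneity of the $\ell^p$-basis and the small-perturbation principle for block bases, while for $p=1$ it relies on Rosenthal's $\ell^1$-theorem combined with the Schur property and weak sequential completeness of $\ell^1$. Once $(x_n)$ and $(Ax_n)$ are both pinned to the canonical basis, the lower estimate for $A|_M$ is a one-line computation.
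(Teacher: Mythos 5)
Your argument is correct. Note, however, that the paper does not prove this proposition at all: it is quoted verbatim from Lindenstrauss--Tzafriri \cite[Proposition~2.c.3]{LiTz}, so there is no in-paper proof to compare against. What you have written is essentially the standard proof of that cited result: the forward direction is the elementary Riesz-lemma argument, and the converse is the classical gliding-hump scheme --- extract a $\delta$-separated image sequence from non-compactness, pass to a (semi-)normalised weakly null sequence, and use the Bessaga--Pe\l czy\'nski selection principle together with the fact that semi-normalised block bases of the $\ell^p$-basis are uniformly equivalent to it, so that both $(z_n)$ and $(Az_n)$ become equivalent to the canonical basis and $A$ is bounded below on $M=\overline{\operatorname{span}}\{z_n\}$. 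Your separate treatment of $p=1$ via Rosenthal's theorem plus the Schur property and weak sequential completeness is valid, though it is heavier machinery than needed: in $\ell^1$ a semi-normalised sequence whose coordinates tend to $0$ (arranged by a diagonal argument, since no norm-convergent subsequence exists and $\ell^1$ has the Schur property) already admits a subsequence equivalent to a block basis by the same gliding-hump perturbation argument as for $1<p<\infty$, so the case split is not essential. All steps check out; the only points worth flagging explicitly in a written version are (i) that after subtracting the weak limit the sequence $(x_n-x)$ stays bounded away from $0$ in norm because $(A(x_n-x))$ remains $\delta$-separated, and (ii) that the subsequences for $(z_n)$ and $(Az_n)$ must be extracted consistently (a nested extraction), both of which you have implicitly handled.
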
 

The next result from \cite[Corollary III.3.6]{GoldThorp63} 
extends some of the previous propositions. 
\begin{proposition}
 Let $X$ be $C(K)$ with $K\subset \R^n$  compact, or 
 $\ell^1$, $\ell^\infty$, $L^1(\Omega)$, or $L^\infty(\Omega)$ 
 or the dual of any of these spaces. 
 Then any bounded linear map $X \to Y$ with $Y$ a reflexive Banach space is 
 strictly singular. 
 In particular, this holds for 
 all bounded maps $L^1(\Omega)  \to L^p(\Omega) $ or 
 $L^\infty(\Omega) \to L^p(\Omega) $ with $p\in (1,\infty)$.
\end{proposition}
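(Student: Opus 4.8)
The plan is to deduce strict singularity from the \emph{Dunford--Pettis property} of the source space together with the reflexivity of the target. Recall that a Banach space $X$ has the Dunford--Pettis property if every weakly compact bounded linear operator defined on $X$ is completely continuous, i.e.\ maps weakly convergent sequences to norm convergent sequences. The first step is to observe that each space in the list has the Dunford--Pettis property: $C(K)$, $\ell^\infty$ and $L^\infty(\Omega)$ are $\mathcal L_\infty$-spaces, $\ell^1$ and $L^1(\Omega)$ are $\mathcal L_1$-spaces, the dual of an $\mathcal L_\infty$-space is an $\mathcal L_1$-space and vice versa, and it is classical (Grothendieck; Lindenstrauss--Pe\l czy\'nski) that every $\mathcal L_1$- and every $\mathcal L_\infty$-space enjoys the Dunford--Pettis property; see also \cite{GoldThorp63}. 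Thus in all cases under consideration $X$ has the Dunford--Pettis property.

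Next, let $T\colon X\to Y$ be bounded and linear with $Y$ reflexive. Since bounded subsets of a reflexive space are relatively weakly compact, $T$ maps the closed unit ball of $X$ into a relatively weakly compact subset of $Y$; hence $T$ is weakly compact. By the Dunford--Pettis property of $X$, $T$ is therefore completely continuous.

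Now assume, towards a contradiction, that $T$ is \emph{not} strictly singular. Then there is an infinite-dimensional closed subspace $M\subseteq X$ on which $T|_M$ is an isomorphism onto its image $T(M)$, which is then automatically closed in $Y$. Being a closed subspace of the reflexive space $Y$, $T(M)$ is reflexive, and hence so is $M$. The heart of the argument is to construct in the infinite-dimensional reflexive space $M$ a \emph{normalized weakly null sequence} $(y_n)$: by Mazur's theorem, $M$ contains a normalized basic sequence $(e_n)$; by reflexivity and the Eberlein--\v{S}mulian theorem, a subsequence converges weakly, say $e_{n_k}\rightharpoonup e$; then the differences $z_k:=e_{n_{2k}}-e_{n_{2k-1}}$ satisfy $z_k\rightharpoonup 0$, while $\inf_k\|z_k\|>0$ because the coordinate functionals of a normalized basic sequence are uniformly bounded in norm; setting $y_k:=z_k/\|z_k\|$ yields the desired sequence. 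Since $y_k\rightharpoonup 0$ in $M$, and therefore also in $X$, complete continuity of $T$ forces $\|Ty_k\|\to 0$. On the other hand, $T|_M$ being an isomorphism gives $\|Ty_k\|\ge \|(T|_M)^{-1}\|^{-1}>0$ for all $k$, a contradiction. Hence $T$ is strictly singular.

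The stated special cases are immediate: for $p\in(1,\infty)$ the space $L^p(\Omega)$ is reflexive, while $L^1(\Omega)$ and $L^\infty(\Omega)$ have the Dunford--Pettis property, so every bounded linear map $L^1(\Omega)\to L^p(\Omega)$ and every bounded linear map $L^\infty(\Omega)\to L^p(\Omega)$ is strictly singular. The only genuinely technical point I anticipate is the extraction of the normalized weakly null sequence in $M$ (a standard but not entirely trivial Banach-space fact); beyond that the argument merely assembles classical results, and the one citation-level subtlety is to make sure the duals appearing in the list are covered, which is precisely why it is convenient to phrase the Dunford--Pettis input through the class of $\mathcal L_1$- and $\mathcal L_\infty$-spaces, which is stable under duality.
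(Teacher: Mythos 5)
Your proof is correct. Note, however, that the paper does not actually prove this proposition: it is imported from the literature (cited as \cite[Corollary III.3.6]{GoldThorp63}), so there is no in-paper argument to compare against. Your route --- every operator into a reflexive space is weakly compact, the listed $\mathcal{L}_1$- and $\mathcal{L}_\infty$-spaces all have the Dunford--Pettis property (a class stable under duality, which disposes of the dual spaces in the statement), and a completely continuous operator cannot be bounded below on an infinite-dimensional reflexive subspace --- is precisely the classical mechanism behind the cited result, so in substance you have reconstructed the source's proof rather than found a different one. The individual steps all check out: $T(B_X)$ is relatively weakly compact because $Y$ is reflexive; if $T|_M$ were an isomorphism then $T(M)$ would be closed and reflexive and hence so would $M$; and your extraction of a normalized weakly null sequence (differences along a weakly convergent subsequence of a normalized basic sequence, bounded below via the coordinate functionals) is sound, with weak nullity in $M$ passing to $X$ by Hahn--Banach. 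One available shortcut: complete continuity of $T$ together with weak sequential compactness of $B_M$ already makes $T|_M$ compact, and a compact operator is never an isomorphism on an infinite-dimensional subspace, which bypasses the basic-sequence construction entirely. The only ingredients you rightly leave to citation are the Dunford--Pettis property of $\mathcal{L}_1$- and $\mathcal{L}_\infty$-spaces (Grothendieck, Lindenstrauss--Pe\l czy\'nski) and the duality between these two classes.
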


\smallskip

The sections of this work are organized as follows. In Section~\ref{sec:Hilbert}, the Hilbert space setting is discussed, and the specific situation there is illustrated by Figure~\ref{fig:Hilbert}. The Banach space setting for injective bounded linear operators will be outlined in Section~\ref{sec:Banach1} with respect to well- and ill-posedness, where now non-compact operators appear that are ill-posed of type~II in the sense of Definition~\ref{def:type}. The situation of this section is illustrated by Figure~\ref{fig:Banach1}. Section~\ref{sec:Banach2} plays a central role for this paper, because there for the general case of bounded linear operators in Banach spaces, an adapted Definition~\ref{def:general} is introduced and illustrated by Figure~\ref{fig:Banach2}.
This definition covering complemented and uncomplemented null-spaces highlights well-posedness and ill-posedness of type~I and type~II in the sense of Nashed with the occurring distinguished facets of this Banach space world. Section~\ref{sec:alternative} completes the paper by presenting and evaluating a series of alternative concepts of well-posedness from the literature. In Subsection~\ref{sec:sub1}, characterizations of well-posedness by generalized inverses are given, whereas in Subsection~\ref{sec:sub2} alternative concepts for types of ill-posedness are under consideration, illustrated by Figure~\ref{fig:FHV25} and a couple of enlightening  examples in Section~\ref{sec:alternative}.



\section{Hilbert space setting} \label{sec:Hilbert}

The diagram in Figure~1 below illustrates, under the auspices of Definition~\ref{def:type}, which means that well-posed and ill-posed is to understand in the sense of $\Ra$-well-posed and $\Ra$-ill-posed, the different cases that are possible for equation \eqref{eq:opeq} in the Hilbert space setting, where the bounded linear operator $A:X \to Y$ maps between the 
infinite-dimensional Hilbert spaces $X$ and $Y$.

\begin{figure}
\begin{center}
           \begin{tikzpicture}
            \draw[fill={rgb:black,0.5;white,10}, thick]  plot[smooth, tension=.7, fill] coordinates {(-3.5,0.5) (-3,2.5) (-1,3.5) (1.5,3.5)
            (4,3.5) (6,2.5) (6.,-0.1) (4.,-2) (0,-1.9) (-3,-2) (-3.5,0.5)} ;  
                \draw[fill={rgb:black,4;white,10}, thick]  plot[smooth, tension=.7, fill] coordinates {(1.5,3.5)
            (4,3.5) (6.5,2.5) (6.5,0.0) (4.5,-2) (0.8,-1.95)    (1.1,0.)  (1.75,1.7)  (1.5,3.5)} ;

            \begin{scope}[shift={(-1,1.5)}]
            \draw[fill={rgb:black,4;white,10},thick]  plot[smooth, tension=.7, fill] coordinates {(2,-0.7) (1.5, 0) (2,1) 
            (2.6 ,1.3)
            (2.7 ,1.)
            (2.8 ,0.3)
            (2.65 ,-0.3)
            (2.35,-0.75) 
            (2,-0.7)    
            } ;
            \end{scope}

            \draw[thick]
             (1.5,3.5) .. controls (1.8,1.8) and (2.1,2) .. (1.25,0.45) ;
             
            \draw[thick]
             (-3.4,1.5) .. controls (-2.2,1.3) and (-2.0,1.7) .. (1.25,0.45) ;             

 \draw[->,thick]        (1.5,4.0)   -- (1.5,2.5); 
  \node at (1.5,4.7) {\begin{tabular}{c} well-posed compact \\    
                          = finite dimensional range \\
                           (strictly singular with closed range)
                       \end{tabular}
};

\node at (-0.7,2.3) {\textbf{well-posed}};
\node at (-0.7,2.0) {(closed range)};
\node at (-1.5,0.2) {\textbf{ill-posed type I}};
\node at (-1.5,-0.2) {\textbf{non-compact}};
\node at (-1.5,-0.7) {(non-closed range)};
\node at (3.9,0.7) {\textbf{ill-posed type II}};
\node at (3.9,0.3) {\textbf{compact}};
\node at (3.9,-0.2) {(non-closed range)};
\node at (-0.2,1.1) [rotate=-15] {operator not};
\node at (-0.8,0.9) [rotate=-15] {strictly singular};
\node at (4.7,1.7) {operator strictly};
\node at (4.9,1.4) {singular};
\node at (1.0,1.7) [rotate=70]  {compact};
\node at (1.3,1.7) [rotate=70]  {well-posed};
\end{tikzpicture}        
\end{center}
\caption{Case distinction for bounded linear operators between 
infinite-dimensional \emph{Hilbert spaces}. Here, strictly singular operators are always compact. 
}\label{fig:Hilbert}
\end{figure}
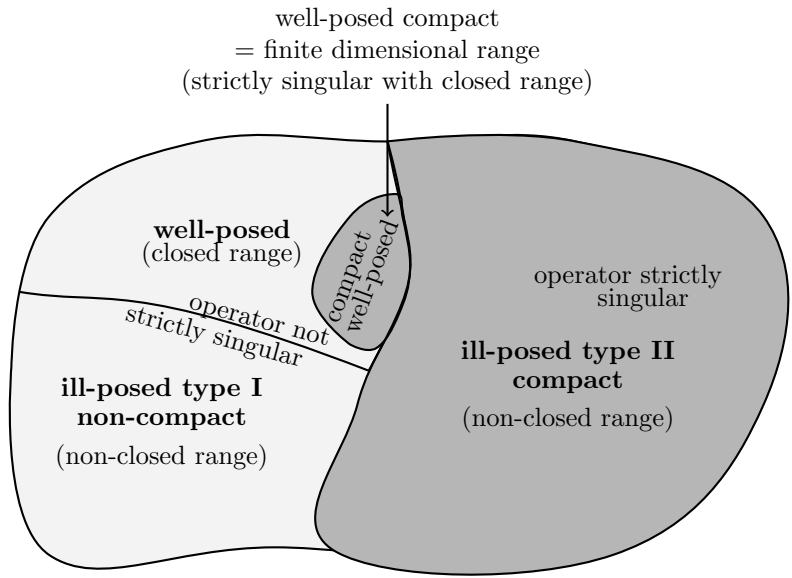

Compact and thus strictly singular operators with 
infinite-dimensional range possess a non-closed range and are therefore $\Ra$-ill-posed. This class of operators, which are characterized by an $\Ra$-ill-posedness of type~II, fills out the right part of Figure~\ref{fig:Hilbert}. Compact operators with finite-dimensional range are also strictly singular, but $\Ra$-well-posed and belong with its closed range (cf.~Lemma~\ref{lem:finite}) to the left part of Figure~\ref{fig:Hilbert}. The main part of the left half of this figure consists of not strictly singular operators, the range of which posssesses a closed infinite-dimensional subspace of $Y$. These operators are either $\Ra$-well-posed with closed range or alternatively $\Ra$-ill-posed of type~I with non-closed range. 
$\Ra$-well-posedness takes place there for continuously invertible operators with the identity operator as the simplest example. $\Ra$-ill-posed operators of type~I can, for example, be found in the class of bounded multiplication operators with essential zeros of the multiplier functions (cf.~\cite{nmh22,WernerHof25}) and by Cesaro and Hausdorff moment operators (cf.~\cite{Gerth21,KindHof24}).   

Let us emphasize that in Hilbert spaces under Definition~\ref{def:type} ill-posed situations are clearly distinguished by the fact that all ill-posed operators of type~II are compact, whereas all ill-posed operators of type~I are non-compact. The next section will show that this is not in general true in Banach spaces.

\section{Banach space setting for injective operators} \label{sec:Banach1}

The diagram in Figure~\ref{fig:Banach1} 
illustrates, under the auspices of Definition~\ref{def:type}, the different cases occurring for equation \eqref{eq:opeq} in the Banach space setting when the bounded linear operator
$A:X \to Y$ mapping between the 
infinite-dimensional Banach spaces $X$ and $Y$ is \emph{injective}. 
It follows from Definition~\ref{def:sso} that strictly singular operators $A$
cannot lead to $\Ra$-well-posedness or $\Ra$-ill-posendess of type~I for the operator equation \eqref{eq:opeq}, because for injective strictly singular operators the range $\mathcal{R}(A)$ of $A$ does not contain an infinite-dimensional closed subspace of $Y$. Vice versa, ``any bounded linear operator between Banach spaces whose range does not contain any
infinite-dimensional closed subspace is strictly singular''
(\cite[Remark]{GoldThorp63}). Consequently, the property of strict singularity separates the left part of Figure~\ref{fig:Banach1} from the right part. 

Note that here the area of compact operators is a proper subset of the area of strictly singular operators (right part of Figure~\ref{fig:Banach1}) that are always $\Ra$-ill-posed of type~II (see, e.g.,~\cite[Prop.~4.6]{FHV15}). On the one hand, for such injective bounded linear operators $A$, finite-dimensional ranges  $\Ra(A)$ do not occur. On the other hand, in Banach spaces, injective bounded non-compact and strictly singular linear operators exist (see Example~\ref{ex:embeddings} below concerning embedding operators $\mathcal{E}$ from $\ell^p$ to $\ell^q$ for $p<q$), which proves that the compact operators are not enough to fill out the right part of the figure. 
This indicates a substantial difference between Figure~\ref{fig:Hilbert} and Figure~\ref{fig:Banach1}, because compact operators with infinite-dimensional range completely fill out the right part of Figure~\ref{fig:Hilbert} in the Hilbert space setting.

Examples for compact operators in Banach spaces can be found by injective bounded operators $\mathcal{E}: \ell^p \to \ell^r$ with $r < p $ and $1 \le p,r <\infty$ due to
Pitt's theorem (cf.~Proposition~\ref{th:pitt}). Also injective linear Fredholm integral operators mapping in $C[0,1]$ with continuous non-degnerating kernel functions represent examples of compact operators with infinite-dimensional range.

Not strictly singular injective operators $A$ (left part of Figure~\ref{fig:Banach1}), the range of which possesses an infinite-dimensional closed subspace of $Y$,
lead either to \emph{$\Ra$-well-posed} operator equations \eqref{eq:opeq}  with closed range if $A$ is \emph{continuously invertible} or to equations \emph{$\Ra$-ill-posed of type~I} if $A$ has a non-closed range $\mathcal{R}(A) \not=\overline{\mathcal{R}(A)}^Y$.

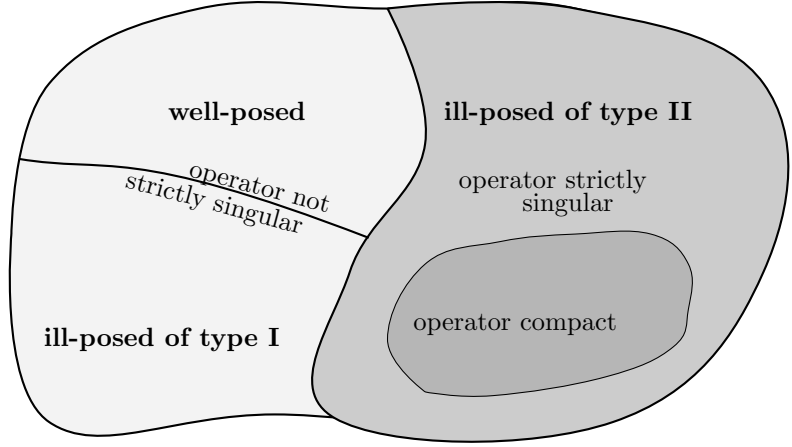
\begin{figure}[ht]
\begin{center}
           \begin{tikzpicture}
            \draw[fill={rgb:black,0.5;white,10}, thick]  plot[smooth, tension=.7, fill] coordinates {(-3.5,0.5) (-3,2.5) (-1,3.5) (1.5,3.5)
            (4,3.5) (6,2.5) (6.,-0.1) (4.,-2) (0,-1.9) (-3,-2) (-3.5,0.5)} ;  
                \draw[fill={rgb:black,2.5;white,10}, thick]  plot[smooth, tension=.7, fill] coordinates {(1.5,3.5)
            (4,3.5) (6.5,2.5) (6.5,0.0) (4.5,-2) (0.8,-1.95)    (1,0.)  (2.0,1.7)  (1.5,3.5)} ;

            \begin{scope}[shift={(0,-0.9)}]
            \draw[fill={rgb:black,4;white,10}]  plot[smooth, tension=.7, fill] coordinates {(2,-0.7) (1.5, 0) (2,1) (3 ,1.3)
            (4 ,1.4)       (5 ,1.4)       (5.5 ,1.0)   (5.5 ,0.4)     (5.3,-0.1)     (4.4,-0.5)     (2.9,-0.75)     (2,-0.7)    
            } ;
            \end{scope}
            \draw[thick]
             (-3.4,1.5) .. controls (-2.2,1.3) and (-2.0,1.7) .. (1.25,0.45) ;             
\node at (-0.5,2.1) {\textbf{well-posed}};
\node at (-1.5,-0.9) {\textbf{ill-posed of type I}};
\node at (3.9,2.1) {\textbf{ill-posed of type II}};
\node at (-0.2,1.1) [rotate=-15] {operator not};
\node at (-0.8,0.9) [rotate=-15] {strictly singular};
\node at (3.7,1.2) {operator strictly};
\node at (3.9,0.9) {singular};
\node at (3.2,-0.7) {operator compact};
\end{tikzpicture}        
\caption{Case distinction for \emph{injective} bounded linear operators mapping between
infinite-dimensional \emph{Banach spaces} (cf.~\cite[p.287]{FHV15}).}\label{fig:Banach1}
\end{center}
\end{figure}

\section{Banach space setting for general operators} \label{sec:Banach2}

The characterization of well-posedness and ill-posedness of operator equations \eqref{eq:opeq} with \emph{not necessarily injective} bounded linear operators $A: X \to Y$ mapping between 
infinite-dimensional Banach spaces $X$ and $Y$ in Nashed's sense has to take into account that the null-space $\mathcal{N}(A)$ of the operator $A$ need not be
(topologically) complemented in $X$. By considering the direct sum $X=\mathcal{N}(A)\oplus U$, the null-space $\mathcal{N}(A)$ is called \emph{complemented} in $X$ if $U$ is also a closed subspace of $X$ as the subspace $\mathcal{N}(A)$ is, otherwise $\mathcal{N}(A)$ is called \emph{uncomplemented} in $X$. Banach spaces $X$, which are not isomorphic  to a Hilbert space, always contain uncomplemented subspaces (see, e.g.,~\cite{LiTz}). Consequently, for the general Banach space setting, Definition~\ref{def:type} has to be replaced by the following Definition~\ref{def:general} which covers both the case of complemented null-space $\mathcal{N}(A)$ and the case that $\mathcal{N}(A)$ is uncomplemented in $X$.  

\begin{definition}[Well- and ill-posedness characterization] \label{def:general}
Let $A: X \to Y$ be a bounded linear operator mapping between the 
infinite-dimensional Banach spaces $X$ and $Y$.  

Then the operator equation \eqref{eq:opeq} is called \emph{well-posed} if 
\begin{align*} &\text{the range $\mathcal{R}(A)$ of $A$ is a closed subset of $Y$ and, moreover,} \\ 
&\text{the null-space $\mathcal{N}(A)$ is \emph{complemented} in $X$;}
\end{align*}
otherwise the equation (\ref{eq:opeq}) is called \emph{ill-posed}. 

In the ill-posed case, (\ref{eq:opeq}) is called \emph{ill-posed of type I}   if 
\begin{align*} 
&\text{the range $\mathcal{R}(A)$ contains an \emph{infinite-dimensional closed subspace} $M$ of $Y$ and} \\ 
&\text{the null-space $\mathcal{N}(A)$ is \emph{complemented} in $A^{-1}[M]$.}
\end{align*} 
Here, $A^{-1}[M]$ is  the inverse image of $M$ under $A$.

Otherwise the ill-posed
equation (\ref{eq:opeq}) is called \emph{ill-posed of type~II}.
\end{definition}

The following note is a consequence of the corresponding discussions in \cite[Section~1]{Flemmingbuch18}: For the linear bounded operator $A:X \to Y$ defined on the Banach space $X$ with a direct sum decomposition as $X=\mathcal{N}(A)\,\oplus \, U$, the restriction 
$A|_U: U \to \mathcal{R}(A)$ of $A$ is a bijective mapping, and we denote its inverse
$A^\dagger_U: \mathcal{R}(A) \to U$ as \emph{generalized pseudoinverse} of $A$. In the injective case, $A^\dagger_U$ and $A^{-1}:\mathcal{R}(A) \to X$ coincide.

\begin{proposition}[\mbox{\cite[Proof of Prop.~1.10 and Prop.~1.11]{Flemmingbuch18}}]
\label{pro:Flemming}
If, for $A: X \to Y$ mapping between infinite-dimensional Banach spaces $X$ and $Y$, the null-space $\mathcal{N}(A)$ is complemented in $X$, then the generalized pseudoinverse $A_U^\dagger$ is bounded on any
closed subspace of the range $\mathcal{R}(A)$. If $\mathcal{N}(A)$ is uncomplemented in $X$, then $A_U^\dagger$ is always an unbounded operator.
\end{proposition}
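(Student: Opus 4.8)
The plan is to obtain both halves from the open mapping (bounded inverse) theorem, the only real work being to pass to an honest bijection between complete spaces in the first case and to manufacture a bounded projection in the second. Throughout, $X = \mathcal{N}(A)\oplus U$ is fixed and $A_U^\dagger : \mathcal{R}(A)\to U$ denotes the inverse of the bijection $A|_U : U \to \mathcal{R}(A)$, where $\mathcal{R}(A|_U) = \mathcal{R}(A)$ and $A|_U$ is injective because $\mathcal{N}(A)\cap U = \{0\}$.

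\emph{Complemented case.} Fix a subspace $M \subseteq \mathcal{R}(A)$ that is closed in $Y$; then $M$ is a Banach space. I would set $V := (A|_U)^{-1}[M] = \{u\in U : Au\in M\}$. Since $U$ is closed in $X$ (this is where complementedness of $\mathcal{N}(A)$ enters) and $A|_U$ is continuous, $V$ is closed in $X$, hence a Banach space. Next I would check that $A|_V : V \to M$ is a bounded linear bijection: injectivity is inherited from $A|_U$, and surjectivity onto $M$ holds because $\mathcal{R}(A|_U) = \mathcal{R}(A)\supseteq M$, so the (unique) $A|_U$-preimage of any $m\in M$ automatically lies in $V$. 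The bounded inverse theorem then yields that $(A|_V)^{-1} : M \to V \hookrightarrow X$ is bounded, and $(A|_V)^{-1}$ is precisely $A_U^\dagger|_M$. Hence $A_U^\dagger$ is bounded on every closed $M\subseteq \mathcal{R}(A)$; specializing to $M = \mathcal{R}(A)$ when the range is closed recovers the usual formulation.

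\emph{Uncomplemented case.} Suppose, for contradiction, that $A_U^\dagger : \mathcal{R}(A) \to X$ is bounded, where $\mathcal{R}(A)$ carries the norm inherited from $Y$. Viewing $A$ as a bounded map $X \to \mathcal{R}(A)$, the composition $P := A_U^\dagger A : X \to X$ is bounded and linear. Using the identity $A\,A_U^\dagger = \mathrm{id}_{\mathcal{R}(A)}$ one gets $P^2 = A_U^\dagger(A\,A_U^\dagger)A = A_U^\dagger A = P$, so $P$ is a bounded projection; and reading off the decomposition $X = \mathcal{N}(A)\oplus U$ shows $\mathcal{N}(P) = \mathcal{N}(A)$ and $\mathcal{R}(P) = U$. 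But then $U = \mathcal{R}(P) = \mathcal{N}(I-P)$ is the kernel of a bounded operator, hence closed in $X$, contradicting the assumption that $\mathcal{N}(A)$ has no closed algebraic complement. Therefore $A_U^\dagger$ must be unbounded.

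I do not expect a genuine difficulty here; the idempotency verification and the identifications $\mathcal{N}(P)=\mathcal{N}(A)$, $\mathcal{R}(P)=U$ are routine. The one point that needs attention — and the only place the hypotheses actually bite — is keeping the notion of boundedness consistent: in the first part one needs $V$ to be \emph{complete}, which forces both $U$ closed in $X$ and $M$ closed in $Y$ (a merely relatively closed $M\subseteq\mathcal{R}(A)$ would not suffice for the bounded inverse theorem); in the second part one needs that boundedness of $A_U^\dagger$ measured in the $Y$-norm on $\mathcal{R}(A)$ genuinely produces a bounded operator on $X$, which is exactly what makes $P$ amenable to the closed-subspace argument.
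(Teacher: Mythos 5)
Your proof is correct. Note that the paper itself gives no proof of this proposition --- it is cited verbatim from Flemming's book (Propositions~1.10 and 1.11) --- but your argument is the standard one and surely the intended one: the open mapping theorem applied to the bijection $A|_V : V \to M$ between the two Banach spaces $V = U \cap A^{-1}[M]$ and $M$ in the complemented case, and in the uncomplemented case the observation that boundedness of $A_U^\dagger$ would make $P = A_U^\dagger A$ a bounded idempotent with range $U$, forcing $U = \mathcal{N}(I-P)$ to be closed. Your closing remarks correctly identify the two places where the hypotheses are actually used (completeness of $V$, and the $Y$-norm on $\mathcal{R}(A)$).
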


We emphasize that we can justify the central role of Definition~\ref{def:general} with respect to well-posedness concepts in this paper with the following corollary, which is an immediate consequence of Proposition~\ref{pro:Flemming}.  

\begin{corollary} \label{cor:pseudinverse}
For a bounded linear operator $A: X \to Y$ mapping between the 
infinite-dimensional Banach spaces $X$ and $Y$, the operator equation \eqref{eq:opeq} is well-posed in the sense of
Definition~\ref{def:general} if and only if there is a bounded linear generalized pseudoinverse $A^\dagger_U: \Ra(A) \to U$ for a decomposition $X=\Nu(A)\! \oplus\, U$ of the Banach space $X$ with null-space $\Nu(A)$.
\end{corollary}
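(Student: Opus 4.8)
The plan is to prove the equivalence by establishing the two implications separately, using the first assertion of Proposition~\ref{pro:Flemming} for the direct implication and a bounded-idempotent argument for the converse.

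For the direct implication, I would assume \eqref{eq:opeq} is well-posed in the sense of Definition~\ref{def:general}, so that $\Ra(A)$ is closed in $Y$ and $\Nu(A)$ is complemented in $X$. Complementedness provides a \emph{closed} subspace $U$ with $X=\Nu(A)\oplus U$; then $A|_U\colon U\to\Ra(A)$ is a bijection and the generalized pseudoinverse $A^\dagger_U\colon\Ra(A)\to U$ is well defined. By the first assertion of Proposition~\ref{pro:Flemming}, $A^\dagger_U$ is bounded on every closed subspace of $\Ra(A)$, and applying this to $\Ra(A)$ itself---closed by hypothesis---shows that $A^\dagger_U$ is a bounded operator. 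This exhibits a decomposition $X=\Nu(A)\oplus U$ together with a bounded generalized pseudoinverse, as required.

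For the converse, suppose that for some decomposition $X=\Nu(A)\oplus U$ (a priori only an algebraic direct sum) the operator $A^\dagger_U\colon\Ra(A)\to U$ is bounded. I would first form $P:=A^\dagger_U\circ A\colon X\to X$, which is bounded since $A\colon X\to Y$ is bounded and maps into $\Ra(A)$, while $A^\dagger_U\colon\Ra(A)\to U\subseteq X$ is bounded by hypothesis; since $P$ restricted to $U$ equals the identity on $U$ and $P$ vanishes on $\Nu(A)$, it is a bounded idempotent with range $U$ and kernel $\Nu(A)$. Hence $U$ is a closed subspace of $X$ and the decomposition $X=\Nu(A)\oplus U$ is topological, so $\Nu(A)$ is complemented---in particular recovering the contrapositive of the second assertion of Proposition~\ref{pro:Flemming}. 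It then remains to show that $\Ra(A)$ is closed in $Y$: as $U$ is closed it is itself a Banach space, $A|_U\colon U\to\Ra(A)$ is a continuous linear bijection whose inverse $A^\dagger_U$ is continuous, so $\Ra(A)$ equipped with the norm inherited from $Y$ is linearly homeomorphic to $U$ and therefore complete, giving $\Ra(A)=\overline{\Ra(A)}^{Y}$. Both requirements of Definition~\ref{def:general} then hold, so \eqref{eq:opeq} is well-posed.

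The step I expect to require the most care is the closedness of $\Ra(A)$ in the converse: Proposition~\ref{pro:Flemming} only relates boundedness of $A^\dagger_U$ to complementedness of $\Nu(A)$ and says nothing about the range, so the completeness argument via the homeomorphism $A|_U$ must be supplied by hand. A secondary subtlety is that in the converse $U$ is only given as an algebraic complement, and it is precisely the boundedness of $P=A^\dagger_U\circ A$ that promotes it to a closed topological complement---without which $A|_U$ need not be a homeomorphism and the range argument would break down.
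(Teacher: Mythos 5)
Your proof is correct and matches the paper's intent: the paper states this corollary without a written proof, calling it an immediate consequence of Proposition~\ref{pro:Flemming}, and your forward direction is exactly that application of the proposition's first assertion to the closed subspace $\Ra(A)$ itself. In the converse you correctly identify and fill the one point the proposition does not literally deliver---closedness of $\Ra(A)$---via the bounded idempotent $P=A^\dagger_U\circ A$ (which forces $U$ to be closed, hence $\Nu(A)$ complemented) and the transfer of completeness from $U$ to $\Ra(A)$ through the homeomorphism $A|_U$; this is sound and supplies detail the paper leaves implicit.
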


As in Lemma~\ref{lem:finite}, 
the well-posed and strictly 
singular operators are those 
with finite-dimensional range.
\begin{lemma}
Let $A$ be  well posed and 
strictly singular. Then $\Ra(A)$ 
is finite dimensional and $A$ is also compact. 
Conversely, if $\Ra(A)$ 
is finite dimensional, then 
$A$ is well-posed and strictly
singular. 
\end{lemma}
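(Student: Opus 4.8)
The plan is to prove both implications, noting that the converse direction is essentially immediate and the forward direction is where the work lies.

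For the converse, suppose $\Ra(A)$ is finite dimensional. Then $A$ is strictly singular and compact with closed range by the first assertion of Lemma~\ref{lem:finite}. It remains to see that $A$ is well-posed in the sense of Definition~\ref{def:general}, i.e.\ that $\Nu(A)$ is complemented in $X$. This is a standard fact: since $\Ra(A)$ is finite dimensional, $A$ factors as $X \to X/\Nu(A) \xrightarrow{\sim} \Ra(A)$, so the codimension of $\Nu(A)$ in $X$ equals $\dim \Ra(A) < \infty$; a closed subspace of finite codimension is always complemented (pick a basis of a finite-dimensional algebraic complement). Hence $A$ is well-posed. Together with closedness of $\Ra(A)$ and strict singularity, this gives the converse.

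For the forward direction, assume $A$ is well-posed and strictly singular. Well-posedness gives that $\Ra(A)$ is closed and $\Nu(A)$ is complemented in $X$, say $X = \Nu(A) \oplus U$ with $U$ closed. Then $A|_U : U \to \Ra(A)$ is a bounded linear bijection onto the closed (hence Banach) space $\Ra(A)$, so by the open mapping theorem it is a linear homeomorphism; equivalently the generalized pseudoinverse $A_U^\dagger$ is bounded (this is also the content of Corollary~\ref{cor:pseudinverse} / Proposition~\ref{pro:Flemming}). Now apply strict singularity. The restriction of $A$ to the infinite- or finite-dimensional subspace $U$ is an isomorphism onto its image; by Definition~\ref{def:sso}, $A$ restricted to any infinite-dimensional subspace fails to be an isomorphism, so $U$ cannot be infinite dimensional. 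Therefore $\dim U < \infty$, and since $A|_U$ is onto $\Ra(A)$, the range $\Ra(A)$ is finite dimensional. Finally, a bounded operator with finite-dimensional range is compact, so $A$ is compact as claimed.

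The only mild subtlety—hardly an obstacle—is that one must invoke the structural consequence of well-posedness (complementedness of $\Nu(A)$ and closedness of $\Ra(A)$) to produce the closed complement $U$ on which $A$ acts as an isomorphism, rather than arguing on a generic subspace; after that, strict singularity forces $U$ to be finite dimensional exactly as in the proof of Lemma~\ref{lem:finite}. I would phrase the argument to parallel that proof, pointing out that here $\tilde X = U$ plays the role of $\Nu(A)^\perp$ or $X$ in the two cases treated there, and that complementedness is precisely the hypothesis that makes such a $\tilde X$ available in the general Banach space setting. No nontrivial estimate is required.
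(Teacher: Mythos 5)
Your proof is correct, and both directions follow the same overall structure as the paper's: the converse via finite codimension of $\Nu(A)$ and the standard complementation of finite-codimensional closed subspaces, and the forward direction via the decomposition $X=\Nu(A)\oplus U$ and the boundedness of $A|_U^{-1}$ from the open mapping theorem. The one genuine difference is how you conclude that $U$ is finite dimensional: the paper composes the bounded generalized pseudoinverse with $A$ to obtain the identity on $U$, invokes Kato's theorem that strictly singular operators form an operator ideal, and then uses that a strictly singular identity forces finite dimension. You instead observe directly that $A|_U$ is an isomorphism of the closed subspace $U$ onto $\Ra(A)$, which by Definition~\ref{def:sso} (or Proposition~\ref{rem:Kato}) immediately contradicts strict singularity unless $\dim U<\infty$. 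Your route is more elementary -- it needs only the definition of strict singularity rather than the ideal property -- while the paper's ideal-based argument is the one that generalizes to the Hilbert-space and injective cases treated uniformly in Lemma~\ref{lem:finite}, where a closed complement is produced by different means. Both are valid; yours is arguably the cleaner argument for this particular statement, since well-posedness hands you the closed subspace $U$ for free.
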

\begin{proof}
If $A$ is well-posed, the 
generalized pseudoinverse 
$A_U^\dagger$ is bounded. 
Since the strictly singular operator 
form an ideal, we have that 
 $A_U^\dagger A$ is strictly singular, but this is 
the identity on $U$. Thus, 
$U$ is finite dimensional 
and hence $\Ra(A) = A(U)$
is finite dimensional as well.
Conversely, if the range 
is finite dimensional, 
the operator is compact 
and thus strictly singular 
and the range is closed. By the open 
mapping theorem, we have an isomorphism 
of $\Ra(A)$ to $X/\Nu(A)$. Thus, $X/\Nu(A)$
is finite dimensional, which means that 
$\Nu(A)$ has finite codimension. It is well-known that then $\Nu(A)$ is complemented
(cf., e.g.,  \cite[Proposition~11.6]{Brezis}), hence $A$ is well-posed. 
\end{proof}
%
\begin{proposition} \label{pro:iff}
Let $A:X \to Y$ be such that the operator equation
\eqref{eq:opeq} is ill-posed problem in the sense of Definition~\ref{def:general}.
Then the equation is ill-posed of type~II if and only if $A$ is 
strictly singular. 
\end{proposition}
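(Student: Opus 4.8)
The plan is to prove both implications by combining the open mapping theorem with the definition of strict singularity (Definition~\ref{def:sso}, equivalently Proposition~\ref{rem:Kato}). Throughout I keep the standing hypothesis of Proposition~\ref{pro:iff} that \eqref{eq:opeq} is ill-posed, so that proving ``ill-posed of type~II'' is the same as proving ``not ill-posed of type~I''.

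\emph{Step 1: if $A$ is not strictly singular, then \eqref{eq:opeq} is ill-posed of type~I.} By Definition~\ref{def:sso} there is an infinite-dimensional subspace $Z_0 \subseteq X$ such that $A|_{Z_0}$ is an isomorphism onto its image, i.e.\ $\|Ax\| \ge c\,\|x\|$ for all $x \in Z_0$ and some $c>0$. This lower bound extends by continuity to the closure, so I may take $Z := \overline{Z_0}$ to be an infinite-dimensional \emph{closed} subspace of $X$ on which $A$ is still bounded below; then $A|_Z$ is injective and $M := A(Z)$ is an infinite-dimensional \emph{closed} subspace of $Y$ with $M \subseteq \Ra(A)$. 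It remains to verify that $\Nu(A)$ is complemented in $A^{-1}[M]$. Since $M \subseteq \Ra(A)$ we have $A(A^{-1}[M]) = M = A(Z)$, hence $A^{-1}[M] = \Nu(A) \oplus Z$ algebraically, and $Z$ is closed in $A^{-1}[M]$ (as $A^{-1}[M]$ is closed in $X$). The projection of $A^{-1}[M]$ onto $Z$ along $\Nu(A)$ equals $(A|_Z)^{-1}\! \circ A$, which is bounded because $(A|_Z)^{-1}\colon M \to Z$ is bounded (open mapping theorem, or directly the lower bound). Thus $\Nu(A)$ is topologically complemented in $A^{-1}[M]$ and \eqref{eq:opeq} is ill-posed of type~I.

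\emph{Step 2: if $A$ is strictly singular, then \eqref{eq:opeq} is not ill-posed of type~I.} Assume, for contradiction, that it is. Then $\Ra(A)$ contains an infinite-dimensional closed subspace $M$ and $A^{-1}[M] = \Nu(A) \oplus V$ for some closed subspace $V$; here $V$ is closed in $X$ since $A^{-1}[M]$ is. As before $M \subseteq \Ra(A)$ yields $A(V) = A(A^{-1}[M]) = M$, and $A|_V$ is injective because $V \cap \Nu(A) = \{0\}$. Therefore $A|_V\colon V \to M$ is a continuous bijection between Banach spaces, hence an isomorphism by the open mapping theorem, while $V$ is infinite-dimensional because $A(V) = M$ is. This contradicts the strict singularity of $A$, so \eqref{eq:opeq} is ill-posed of type~II.

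The two steps are mirror images of each other. The only point that needs a little care is the passage in Step~1 from ``$A$ is an isomorphism on some infinite-dimensional subspace'' to ``$A$ is an isomorphism on some infinite-dimensional \emph{closed} subspace''; this is handled by the elementary observation that ``$A|_{Z_0}$ is an isomorphism onto its image'' is equivalent to ``$A|_{Z_0}$ is bounded below'', a property that survives taking closures (and which makes the image closed). Everything else reduces to the open mapping theorem and the bookkeeping for the direct sum $A^{-1}[M] = \Nu(A) \oplus Z$.
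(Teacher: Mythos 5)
Your proof is correct and follows essentially the same route as the paper's: the open mapping theorem applied to $A|_V\colon V\to M$ in one direction, and the direct-sum decomposition $A^{-1}[M]=\Nu(A)\oplus Z$ in the other. The only (welcome) refinements are that you pass from Definition~\ref{def:sso} to a \emph{closed} subspace via the bounded-below characterization, where the paper instead invokes Proposition~\ref{rem:Kato} directly, and that you exhibit the bounded projection $(A|_Z)^{-1}\circ A$ explicitly, making the complementation of $\Nu(A)$ in $A^{-1}[M]$ fully precise.
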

\begin{proof}
If $A$ is not ill-posed of type~II, 
$\Ra(A)$ contains a closed
infinite-dimensional subspace 
$M = A[X_1]$, where $\Nu(A)$ is complemented in the subspace $X_1$ of $X$.
Then the complement $X_2$ is closed and $M = A[X_2]$.
Now $A|_{X_2}$ is an injective operator onto $M$, and 
by the open mapping theorem there exists a bounded 
inverse on $M$. Thus, $A$ is not strictly singular. 
If, conversely, $A$ is not strictly singular, then there exists 
an infinite-dimensional closed subspace $X_1$ of $X$ such that the restriction $A|_{X_1}$ 
of the operator $A$ with $M=A[X_1]$ has a
bounded inverse. 
Now $X_1 \cap \Nu(A|_{X_1}) = \{0\}$, 
hence $\Nu(A|_{X_1})$ is trivially complemented in $A^{-1}[M]$
and $M$ is a closed subspace of $Y$. Since $A$ was assumed to 
be ill-posed, this means that $A$ is not ill-posed of type~II. 
\end{proof}

Figure~\ref{fig:Banach2} reflects the refined situation of Definition~\ref{def:general}. At first glance, it seems that the differences between Figure~\ref{fig:Banach1} and Figure~\ref{fig:Banach2} are not really great. But that is only an appearance, because the strictly singular operators with infinite-dimensional range
on the right part of Figure~\ref{fig:Banach2} contain cases of uncomplemented null-spaces (see Proposition~\ref{pro:hyb} and Example~\ref{ex:hybrid} below).

\begin{figure}[ht]
\begin{center}
           \begin{tikzpicture}
            \draw[fill={rgb:black,0.5;white,10}, thick]  plot[smooth, tension=.7, fill] coordinates {(-3.5,0.5) (-3,2.5) (-1,3.5) (1.5,3.5)
            (4,3.5) (6,2.5) (6.,-0.1) (4.,-2) (0,-1.9) (-3,-2) (-3.5,0.5)} ;  
                \draw[fill={rgb:black,2.5;white,10}, thick]  plot[smooth, tension=.7, fill] coordinates {(1.5,3.5)
            (4,3.5) (6.5,2.5) (6.5,0.0) (4.5,-2) (0.8,-1.95)    (1.1,0.)  (1.75,1.7)  (1.5,3.5)} ;

            \begin{scope}[shift={(-1,1.5)}]
            \draw[fill={rgb:black,4;white,10},thick]  plot[smooth, tension=.7, fill] coordinates {(2,-0.7) (1.5, 0) (2,1) (3 ,1.3)
            (4 ,1.4)       (5 ,1.4)       (5.5 ,1.0)   (5.5 ,0.4)     (5.3,-0.1)     (4.4,-0.5)     (2.9,-0.75)     (2,-0.7)    
            } ;
            \end{scope}
            
                    \draw[fill={rgb:black,2.5;white,10}, thick]  plot[smooth, tension=.7, fill] coordinates
                    {
               (0.8,-1.75)  (0.55,-1.5)  (0.7,-1.)   (1.15,-0.3)    (2.,-0.5)    (2.,-1.5)    (0.8,-1.75)  
                    };

            \draw[thick]
             (1.5,3.5) .. controls (1.8,1.8) and (2.1,2) .. (1.25,0.45) ;
             
            \draw[thick]
             (-3.4,1.5) .. controls (-2.2,1.3) and (-2.0,1.7) .. (1.25,0.45) ;             

 \draw[->,thick]        (1.3,-3)   -- (1.3,-1.2);
 \draw[->,thick]        (1.5,4.5)   -- (1.5,1.7); 
  \node at (1.5,5.2) {\begin{tabular}{c} well-posed strictly singular\\ = well-posed compact \\    
                          = finite dimensional range
                       \end{tabular}
};
 \node at (1.3,-3.5) {{\begin{tabular}{c} strictly singular with $\Ra(A)$ containing\\ closed infinite dimensional subspace\end{tabular}}};

 \node at (1.4,-0.9) {hybrid};
\node at (-0.5,2.1) {\textbf{well-posed}};
\node at (-1.5,-0.9) {\textbf{ill-posed of type I}};
\node at (3.9,-.5) {\textbf{ill-posed of type II}};
\node at (-0.2,1.1) [rotate=-15] {operator not};
\node at (-0.8,0.9) [rotate=-15] {strictly singular};
\node at (4.7,0.7) {operator strictly};
\node at (4.9,0.4) {singular};
\node at (2.2,1.7) [rotate=10]  {operator compact};
\end{tikzpicture}        
\caption{Case distinction for bounded linear operators between
infinite-dimensional \emph{Banach spaces} with \emph{complemented} and \emph{uncomplemented} null-spaces.}\label{fig:Banach2}
\end{center}
\end{figure}
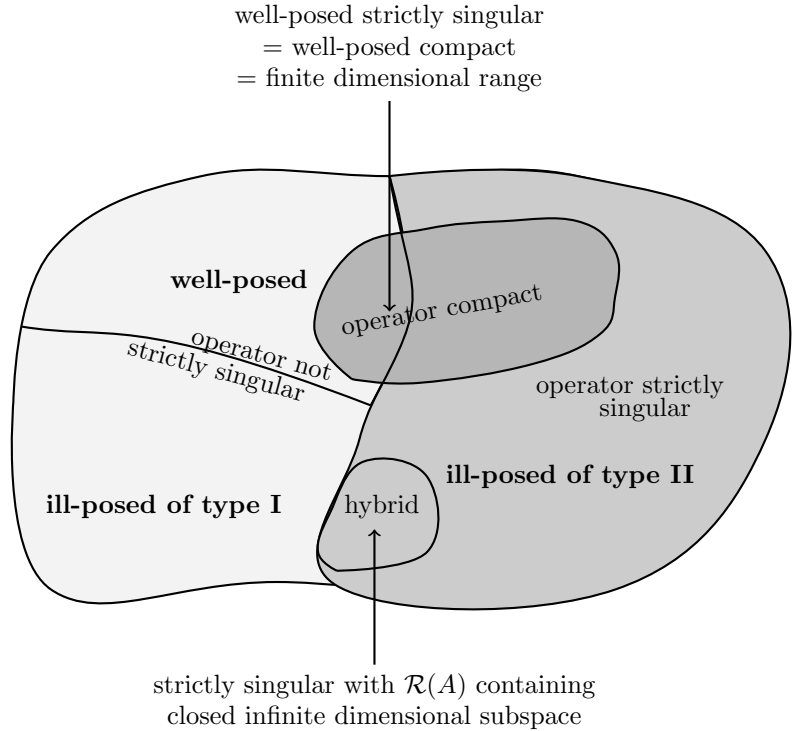

Beforehand we mention that uncomplemented null-spaces according to Definition~\ref{def:general} are not compatible with well-posed situations and ill-posed situations of type~I, which are both assigned to the left part of Figure~\ref{fig:Banach2}.
Since the well-posed case (compact or not) is well-understood, we focus with respect
to explanations of Figure~\ref{fig:Banach2} on the ill-posed operators, and especially on ill-posedness of type~II.

We highlight, in particular, the hybrid case incorporated in Figure~\ref{fig:Banach2} as small circle (see also Example~\ref{ex:hybrid}). 
\begin{definition} \label{def:hybrid}
In the setting of Definition~\ref{def:general}, let us call an operator $A$ of \emph{hybrid} type when $A$ is strictly singular and  its range $\mathcal{R}(A)$ contains an 
infinite-dimensional closed subspace of $Y$. 
\end{definition}

We label such cases as hybrid since 
their type  is  classified 
differently when switching between
Definition~\ref{def:type} and 
Definition~\ref{def:general}:
They are type II by the latter 
definition but type I (or well-posed) by the former.

Next we verify that hybrid operators, which are ill-posed of type~II, 
do not intersect the compact ones as indicated by the figure. Moreover, they are also not compatible with a complemented null-space.

\begin{proposition}\label{pro:hyb}
An operator $A: X \to Y$ of hybrid type in the sense of Definition~\ref{def:hybrid} cannot be compact, and its null-space is always uncomplemented. 
\end{proposition}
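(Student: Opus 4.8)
The plan is to prove the two assertions of Proposition~\ref{pro:hyb} separately, in both cases arguing by contradiction and exploiting the hybrid hypothesis that $\Ra(A)$ contains an infinite-dimensional closed subspace $M$ of $Y$. Write $X_1 = A^{-1}[M]$, a closed subspace of $X$ since $M$ is closed and $A$ is continuous. The restriction $A|_{X_1}: X_1 \to M$ is a bounded surjective operator.

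For the first claim, suppose $A$ were compact. Then $A|_{X_1}: X_1 \to M$ is a compact surjection onto the Banach space $M$, and by the open mapping theorem it is moreover open, hence an open compact surjection onto the infinite-dimensional space $M$. A standard argument then yields a contradiction: the image of the closed unit ball of $X_1$ contains a neighbourhood of the origin in $M$ (by openness), yet this image is relatively compact (by compactness), so $M$ has a relatively compact neighbourhood of zero and is therefore finite-dimensional by Riesz's lemma — contradicting $\dim M = \infty$. Equivalently, one can invoke Lemma~\ref{lem:finite}: a strictly singular operator with closed range between Banach spaces has finite-dimensional range only under the injective or Hilbert hypothesis, so instead the cleaner route here is the direct Riesz argument just sketched, which needs no injectivity.

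For the second claim, suppose toward a contradiction that $\Nu(A)$ is complemented in $X$. I would like to show that then $\Nu(A)$ is complemented in $X_1 = A^{-1}[M]$ as well, which would make $A$ not ill-posed of type~II by Definition~\ref{def:general} — but $A$ is strictly singular and ill-posed, so by Proposition~\ref{pro:iff} it \emph{is} ill-posed of type~II, a contradiction. The key step is therefore the descent of complementedness from $X$ to the closed subspace $X_1$ which contains $\Nu(A)$: if $P: X \to X$ is a bounded projection with range $\Nu(A)$, then since $\Nu(A) \subseteq X_1$ we have $P(X_1) = \Nu(A) \subseteq X_1$, so $P|_{X_1}$ is a bounded projection of $X_1$ onto $\Nu(A)$, witnessing that $\Nu(A)$ is complemented in $X_1$. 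Hence $X_1 = \Nu(A) \oplus X_2$ with $X_2$ closed, $A|_{X_2}: X_2 \to M$ is a bijection, and by the open mapping theorem it is a topological isomorphism, so $A$ is not strictly singular — again contradicting the hybrid hypothesis. (This second contradiction via non-strict-singularity is in fact the more self-contained one and avoids routing through Proposition~\ref{pro:iff}.)

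The main obstacle, such as it is, lies in being careful about which subspace plays the role of the domain: one must note that $A[X_1] = M$ exactly (not merely $A[X_1] \supseteq M$), which holds because $X_1 = A^{-1}[M]$ and $M \subseteq \Ra(A)$, so every point of $M$ has a preimage, and that preimage lies in $X_1$ by definition. With that identification in hand, both the open mapping theorem applications and the restriction of the projection $P$ go through cleanly, and no delicate estimate is required.
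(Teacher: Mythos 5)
Your proof is correct and follows essentially the same route as the paper's: for non-compactness, both arguments apply the open mapping theorem to the surjection $A^{-1}[M]\to M$ and invoke Riesz's lemma on the infinite-dimensional space $M$; for the null-space claim, your restricted projection produces exactly the subspace $X_2 = U\cap A^{-1}[M]$ on which the paper (via $A_U^\dagger[M]$ and Proposition~\ref{pro:Flemming}) exhibits a bounded inverse contradicting strict singularity. The only differences are cosmetic bookkeeping, so nothing further is needed.
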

\begin{proof}
By definition $\Ra(A)$ contains a closed 
infinite-dimensional 
subspace $M$. Take a bounded sequence $(y_n) \subset M$ which does not have a convergent subsequence. 
Such a sequence exists since by the Riesz lemma
since $M$ is infinite dimensional. 
Even though $A$ is non-injective, the 
open mapping theorem still gives a sequence in $x_n \in A^{-1}(y_n)$ with $\|x_n\| \leq C \|y_n\|$ and
$A x_n = y_n$. If $A$ was compact, then $y_n$ would have a convergent subsequence, which is a contradiction. 

If the null-space would be complemented and $A$ is not compact, 
then $X=\mathcal{N}(A) \oplus U$ is connected with an infinite-dimensional closed subspace $U$ of $X$. Then this leads to a contradiction. Namely, let $M \subset \mathcal{R}(A)$ be an 
infinite-dimensional closed subspace of $Y$ and, due to the continuity and injectivity of $A$ on $U$, the set $A_U^{\dagger}[M]$
is an infinite-dimensional closed subspace of $X$ on which $A$ is bounded invertible as a consequence of
Proposition~\ref{pro:Flemming}.
This, however, contradicts the strict singularity of $A$ by Proposition~\ref{rem:Kato}.
\end{proof}

\begin{example}[Hybrid, strictly singular, uncomplemented null-space] \label{ex:hybrid}
\rm 
Examples for strictly singular hybrid operators 
are the Mazur maps: Note that 
for any separable Banach space $Y$ there exists 
a bounded map (Mazur map) $A: X=\ell^1 \to Y$
which is onto (cf.~Example in \cite{GoldThorp63}). Hence the range contains 
a closed infinite-dimensional space (namely $Y$
itself). But by taking $Y = \ell^q$ with $1<q<\infty$,
this is a strictly singular operator
by Proposition~\ref{bpq}, hence 
hybrid. 
Due to Proposition~\ref{pro:hyb}
this operator is non-compact and its null-space is uncomplemented.
We have here ill-posedness of type~II by Proposition~\ref{pro:iff}.
\end{example}

\begin{example}[Non-compact, strictly singular, non-hybrid] \label{ex:embeddings}
\rm
An example of a non-compact but strictly 
singular operator are the embeddings
$\,\mathcal{E}: \ell^p \to \ell^q$ with $1 \le p<q<\infty$. These are injective and
strictly  singular by 
Proposition~\ref{bpq}.  Since $\Nu(A)= \{0\}$
they cannot be hybrid as such instances do 
not exist in the injective case (cf.~Figure~2). 
Moreover this operator is non-compact 
since we may consider the unit-sequence 
$(e_i)_{i=1}^\infty$ with has zero elements except at position 
$i$ where the value is one. Clearly we have 
$\|e_i\|_{\ell^p}=1$ and 
$\|e_i -e_j\|_{\ell^q} = 2^{1/q}$ for $i\not = j$ 
such that the 
bounded sequence $(e_i)_{i=1}^\infty$ cannot have a convergent subsequence. Hence the operator is non-compact, but ill-posed of type~II. 
\end{example}

\begin{example}[General type~I ill-posedness]
\rm
From any ill-posed operator of type~II, we can 
construct  a type~I operator as follows 
by adding the identity: 
Let $A:X \to Y$ be a ill-posed operator. 
Define 
\begin{align*} 
B: \ell^2 \times X &\to \ell^2 \times Y  \\
 ((a_n)_n,x) &\to ((a_n)_n, A x). 
 \end{align*}
We have $\Nu(B) = (0,\Nu(A))$, 
$\Ra(B) = (\ell^2, \Ra(A))$. 
Thus, it is not difficult to see that 
$\Ra(B)$ is closed if and only if $\Ra(A)$ is. 
Moreover $\Nu(B)$ is complemented if and only 
if $\Nu(A)$ is. Thus, $B$ is ill-posed 
by any of the above definitions, if and only 
if $A$ is ill-posed of this type. However since 
$(\ell^2,0)$ constitutes always a closed 
infinite-dimensional subspace in the range, where a bounded inverse 
exists, the operator $B$ is
not strictly singular and thus 
ill-posed of type~I.
\end{example}

\section{Alternative concepts of well-posedness} \label{sec:alternative}
In the following, we consider bounded linear operators $A :X\to Y$ 
between Banach spaces $X$ and $Y$ and discuss further concepts of well-posedness from the literature, in comparison with the already given ones.

\begin{definition}\label{defWP}
We say that the operator equation $A x = y$ (cf.~\eqref{eq:opeq}) is 
\begin{itemize}
\item well-posed in the sense of Hadamard (HaWP) if and only if 
 \[ \Ra(A) = Y \text{ and } \Nu(A) = \{0\};  \]
  \item well-posed in the sense of Hofmann-Scherzer (HSWP) if and only if 
  \[ \Ra(A) \text{ is closed and }  \Nu(A) = \{0\}; \]   
 \item well-posed in the sense of Nashed-Votruba (NVWP) if and only if 
 \begin{align*} &\Ra(A) \text{ closed }, \quad \\
  &\Nu(A) \text{ is complemented in $X$}, \text{ and } \\
 &  \Ra(A) \text{ is complemented in $Y$};
 \end{align*}
 \item  well-posed  (WP) 
 (cf. Def.~\ref{def:general}) if and only if
  \[ \Ra(A) \text{ closed and } 
  \Nu(A) \text{ is complemented in $X$}; 
 \]
 \item Range-well-posed (\RWP) (cf. Def.~\ref{def:type}) 
 if and only if
   \[ \Ra(A) \text{ closed.}
 \]
\end{itemize}
\end{definition}

The definition of (HSWP) was proposed in 
\cite{HofSch98} (see also \cite{Flemmingbuch18}) 
in a slightly different way as the requirement $N(A) = \{0\}$ together with the 
continuity property $A x_n \to A x \Rightarrow x_n \to x$.
A consequence of the open mapping theorem is 
that (HSWP) problems have  a continuous linear 
inverse on its range, such that the latter 
continuity property 
is  equivalent to that listed in Definition~\ref{defWP}. This approach,
which reflects precisely the situation  in Figure~\ref{fig:Banach1},
was motivated by the concept of \emph{local well-posedness} for nonlinear operator equations introduced in \cite[Def.~1.1]{HofSch98}. As also discussed in
\cite{HofPla18}, for linear operator equations, the local character disappears,
because local well-posedness takes place everwhere or nowhere.

The definition of (NVWP) is motivated by the epic study of generalized 
inverses in \cite{Nashedinner87,NashedVotruba76}, where the stated condition 
is equivalent to the existence of a bounded inner inverse mapping $Y \to X$,
where an inner inverse is a linear operator $B$ satisfying $ABA= A$.
Compared to the well-posedness concept in 
Definition~\ref{def:general}, 
the complementedness  of the range is 
 required as additional condition 
 such that (NVWP) serves as 
a stronger condition for well-posedness. 

\begin{example} \rm 
Let $M$ be a closed subspace that is 
not complemented in $X = M \oplus N$. 
Consider the embedding operator $M \to X$.
\begin{align*} A: M &\to X \\
           x &\to x. 
\end{align*}
We have that $\Nu(A) =\{0\}$, $\Ra(A) = M$
is closed. Thus, this is a well-posed problem
by our definition. However, since the range 
is uncomplemented, it is (NV)-ill-posed. 
\end{example}

Note that the open mapping 
theorem shows that 
Hadamard well-posedness (HaWP)
of Definition~\ref{defWP}
also implies stability such 
that we are in accordance 
with the classical 
Hadamard definition. 
Together with our definitions above, 
this constitutes five instances for 
definitions of well-posedness.

Clearly the following implications hold true:  
\[ \text{(HaWP)} 
\quad \Rightarrow  \quad
\begin{array}{c} \text{(NVWP)}  \\
\text{(HSWP)} \end{array}
\quad \Rightarrow  \quad
\text{(WP)}
\quad \Rightarrow \quad 
\text{(\RWP)}. \] 
If $X,Y$ are Hilbert space, then all closed subspaces are complemented and 
  most of the definitions are equivalent: 
\[ 
 \text{(NVWP)}  \\
\quad \Longleftrightarrow  \quad
\text{(WP)}
\quad \Longleftrightarrow  \quad 
\text{(\RWP)} \] 
The same equivalences hold in the Banach space 
case if $A$ is injective 
with dense range.

\begin{remark} \label{rem:quasidistance} \rm
For $y \in \Ra(A)$ and $y_n \in Y$ with $\|y_n-y\|_Y \to 0$ as $n \to \infty$
the  \emph{quasi-distance} 
$${\rm qdist}(A^{-1}[y_n],A^{-1}[y]):= \sup_{\tilde x \in A^{-1}[y_n]}\; \inf_{x \in A^{-1}[y]} \,\|\tilde x-x\|_X,$$
first introduced in \cite{Ivanov63} for general nonlinear problems, is also one possible tool to evaluate stability for linear problems with non-trivial null-spaces $\mathcal{N}(A)$ of the linear operator $A$ mapping between Banach spaces $X$ and $Y$. The monograph \cite{Flemmingbuch18} outlines in Section~1.2 the interplay between range-well-posedness (\RWP) and this kind of stability (labeled in \cite{Flemmingbuch18} as well-posedness in the sense of Ivanov) in the case of multiple solutions (and in a nonlinear setup). Propositions~1.10 and 1.12 in \cite{Flemmingbuch18} show that (\RWP) implies ${\rm qdist}(A^{-1}[y_n],A^{-1}[y]) \to 0$ as $n \to \infty$. If (\RWP)
fails and the null-space is complemented, then this kind of stability also fails.
When $A$ is a linear operator between Banach space  as in this 
article, we may
observe that ${\rm qdist}$
gives  the norm
$\|A^{-1}[y_n] - A^{-1}[y]\|_{X/\Nu(A)}$
in the quotient space $X/\Nu(A)$;
thus this stability requirement is equivalent to 
stability in the quotient space which 
 is equivalent to (\RWP); see the next 
 Theorem~\ref{th:char}.
This  has already been observed in \cite{HofPla18}
for the Hilbert space case. 
\end{remark}

\subsection{Characterizations of well-posedness by
generalized inverses}\label{sec:sub1}
Next, we characterize the well-posedness in Definition~\ref{defWP} and the previously given ones
in terms of generalized inverses, 
building on the seminal work of 
Nashed and Votruba \cite{NashedVotruba76}.
We denote by $L(X,Y)$ the bounded maps from $X \to Y$ and consider only bounded operators $A:X\to Y$.
\begin{theorem}\label{th:char}
A problem $A x = y$ is 
\begin{enumerate}
\item (HaWP) if and only if 
\[ \exists B \in L(Y,X) \text{ with }  B A = Id_X, \quad A B = Id_Y.
 \]
 \item (HSWP) if and only if 
\[ \exists B \in L(\Ra(A),X) \text{ with }  B A = Id_X.
 \]
 \item (NVWP) if and only if 
 \[ \exists B \in L(Y,X)  \text{ with } A B A = A. \] 
  \item (WP) if and only if 
\[ \exists B \in L(\Ra(A),X)  \text{ with } A B A = A .\] 
 \item (\RWP) if and only if 
 \[ \exists \text{(possibly nonlinear) continuous map } B: \overline{\Ra(A)} \to X 
    \text{ with } A B \circ A = A.  
 \]
 or, equivalently, 
 \[ \tilde{A} : X/\Nu(A) \to \Ra(A) \text{ has a bounded linear inverse.}  \]
\end{enumerate} 
\end{theorem}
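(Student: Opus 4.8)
The plan is to prove each of the five equivalences by producing the required operator $B$ from the well-posedness hypothesis and, conversely, extracting the well-posedness properties from the existence of such a $B$. The main tools throughout are the open mapping theorem, the bounded inverse theorem, and the standard fact (already invoked in the excerpt, see e.g.~\cite[Proposition~11.6]{Brezis}) that a closed subspace is complemented iff it is the range of a bounded linear projection, equivalently iff there is a bounded linear left (or right) inverse to the relevant inclusion/quotient map.

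For items (1)--(4) I would argue as follows. For (1), (HaWP) means $A$ is a continuous bijection $X \to Y$, so by the bounded inverse theorem $A^{-1} \in L(Y,X)$ does the job; conversely a two-sided bounded inverse forces bijectivity. For (2), if (HSWP) holds then $A: X \to \Ra(A)$ is a continuous bijection onto a Banach space, hence has a bounded inverse $B \in L(\Ra(A),X)$ with $BA = Id_X$; conversely $BA = Id_X$ gives injectivity of $A$ and, since $B$ is bounded on $\Ra(A)$, the implication $Ax_n \to Ax \Rightarrow x_n = B A x_n \to B A x = x$, which by the open-mapping remark in the excerpt is equivalent to closedness of $\Ra(A)$. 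For (3), (NVWP) gives $X = \Nu(A)\oplus U$ and $Y = \Ra(A)\oplus V$ with all summands closed; let $P\colon X\to U$ be the bounded projection along $\Nu(A)$, $Q\colon Y \to \Ra(A)$ the bounded projection along $V$, and $A_U^\dagger$ the bounded inverse of $A|_U$ (bounded because $\Ra(A)$ is closed, by the bounded inverse theorem). Then $B := A_U^\dagger Q \in L(Y,X)$ satisfies $ABA = A$, since $BAx = A_U^\dagger Q A x = A_U^\dagger A x = A_U^\dagger A(Px) = Px$ and $A(Px) = Ax$. Conversely, given $B\in L(Y,X)$ with $ABA=A$: then $BA$ is a bounded idempotent on $X$ with kernel $\Nu(A)$ (if $BAx=0$ then $Ax = ABAx = 0$, and conversely), so $\Nu(A)$ is complemented; and $AB$ is a bounded idempotent on $Y$ with range $\Ra(A)$ (since $AB(Ax)=Ax$ shows $\Ra(A)\subset \Ra(AB)\subset\Ra(A)$, so they coincide and $\Ra(A)$ is closed and complemented). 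This is exactly (NVWP). Item (4) is the same argument with $V$ and $Q$ deleted: (WP) gives $X=\Nu(A)\oplus U$ with $A_U^\dagger$ bounded on $\Ra(A)$, so $B := A_U^\dagger \in L(\Ra(A),X)$ satisfies $ABA=A$; conversely $ABA=A$ with $B\in L(\Ra(A),X)$ makes $BA$ a bounded projection with kernel $\Nu(A)$ (so $\Nu(A)$ is complemented) and forces $A(Px)=Ax$ with $P=BA$, hence $A|_U$ is a continuous bijection onto $\Ra(A)$ and, arguing as in (2), $\Ra(A)$ is closed.

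For item (5), both stated characterizations must be shown equivalent to (\RWP), i.e.\ to closedness of $\Ra(A)$. Consider the canonical factorization $A = \tilde{A}\,\pi$, where $\pi\colon X \to X/\Nu(A)$ is the quotient map and $\tilde{A}\colon X/\Nu(A)\to \Ra(A)$ is the induced continuous bijection. If $\Ra(A)$ is closed, it is a Banach space, so $\tilde{A}$ has a bounded linear inverse by the bounded inverse theorem; conversely, if $\tilde{A}^{-1}$ is bounded then $\Ra(A) = \tilde A(X/\Nu(A))$ is the continuous bijective image of a Banach space under a map with bounded inverse, hence complete, hence closed in $Y$. This gives the equivalence of (\RWP) with the ``quotient'' formulation. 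For the factored-continuous-map formulation: from (\RWP), pick any bounded linear right inverse $S\colon X/\Nu(A)\to X$ of $\pi$ is \emph{not} available in general (that would need $\Nu(A)$ complemented), so instead one uses a continuous---generally nonlinear---section. Here I would invoke the Bartle--Graves theorem (or Michael's selection theorem): the open surjection $\pi$ admits a continuous (not necessarily linear) right inverse $S\colon X/\Nu(A) \to X$ with $\pi S = Id$. Then $B := S \circ \tilde{A}^{-1}\colon \Ra(A)\to X$ is continuous, extends (arbitrarily, e.g.\ by $0$, or by composing with a continuous retraction of $\overline{\Ra(A)}$ onto $\Ra(A)$ when $\Ra(A)$ is closed the two coincide) to $\overline{\Ra(A)}$, and satisfies $A B A = A$: indeed $BAx = S\tilde A^{-1}(Ax) = S(\pi x)$, and $\pi(S\pi x) = \pi x$, so $A(BAx) = \tilde A \pi S \pi x = \tilde A \pi x = Ax$. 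Conversely, given a continuous $B$ on $\overline{\Ra(A)}$ with $A B A = A$, restrict to $\Ra(A)$; for $y\in\Ra(A)$ write $y=Ax$, then $ABy = ABAx = Ax = y$, so $B$ maps $\Ra(A)$ into a set on which $A$ acts as a right inverse. The image $B[\Ra(A)]$ meets each coset of $\Nu(A)$, and $y\mapsto \pi(By)$ is a continuous inverse of $\tilde A$; a continuous bijective inverse between the metric spaces $\Ra(A)$ and the Banach space $X/\Nu(A)$ forces $\Ra(A)$ to be topologically complete, hence closed in $Y$. The last deduction is the delicate point: one must argue that $\tilde{A}$ having a merely \emph{continuous} inverse (a priori not linear, since composed with $\pi$) already implies $\Ra(A)$ is closed---this follows because $\tilde A^{-1}$ continuous means $\tilde A$ is a homeomorphism onto $\Ra(A)$, transporting completeness of $X/\Nu(A)$ to $\Ra(A)$.

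The main obstacle I anticipate is item (5), specifically the forward direction requiring a \emph{continuous nonlinear} section of the quotient map and the reverse direction where one must upgrade a mere continuous right inverse to the topological conclusion that $\Ra(A)$ is closed. Invoking Bartle--Graves cleanly handles the construction of the section, but care is needed about its domain (it lives on $X/\Nu(A)$, so one composes with $\tilde A^{-1}$, which itself requires $\Ra(A)$ closed---fortunately that is exactly what (\RWP) provides, so the logic is not circular). For the converse, the cleanest route is to observe that a continuous $B$ with $ABA=A$ makes $\pi\circ B|_{\Ra(A)}$ a continuous two-sided inverse of $\tilde A$, hence $\tilde A$ is a homeomorphism, and a space homeomorphic to the complete space $X/\Nu(A)$ is complete and therefore closed in $Y$. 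All other implications (1)--(4) are routine applications of the open mapping / bounded inverse theorems and the projection characterization of complementedness, so they should be dispatched quickly.
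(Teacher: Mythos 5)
Your proposal is correct and follows the same overall skeleton as the paper (produce $B$ from the well-posedness hypothesis, extract the hypothesis back from $B$, with the open mapping theorem and Bartle--Graves as the workhorses), but it differs in two worthwhile respects. For items (3) and (4) the paper simply cites Nashed's inner-inverse theorem \cite[Theorem 7.1]{Nashedinner87}, whereas you give a self-contained argument: constructing $B=A_U^\dagger Q$ (resp.\ $B=A_U^\dagger$) in the forward direction, and in the converse observing that $BA$ and $AB$ are bounded idempotents whose kernel and range are exactly $\Nu(A)$ and $\Ra(A)$, which yields complementedness and closedness in one stroke. This is more elementary and arguably more transparent than the citation; the paper's route is shorter but leans on an external result, and its closedness argument in (4) instead extends $B$ by continuity to $\overline{\Ra(A)}$ and passes to the limit in $ABAx_n=Ax_n$. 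In item (5) you apply Bartle--Graves to the quotient map $\pi:X\to X/\Nu(A)$ and compose with $\tilde A^{-1}$, while the paper applies it directly to the surjection $A:X\to\Ra(A)$; both are valid. One caution on your converse in (5): the phrase that a homeomorphism ``transports completeness'' is not literally true (completeness is not a topological invariant --- think of $(0,1)$ versus $\R$); what saves you is that $\tilde A$ is a \emph{linear} bijection, so its set-theoretic inverse $\pi\circ B|_{\Ra(A)}$ is automatically linear, hence continuous linear, hence Lipschitz, and Lipschitz homeomorphisms do preserve completeness --- equivalently, you have simply verified the second characterization in (5) and can conclude from there. The paper's direct sequential argument ($Ax_n\to y$ implies $A(By)=y$ by continuity of $B$ on $\overline{\Ra(A)}$) is shorter, but your version is sound once this point is made explicit.
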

In the last item we understand by $\tilde{A}$ the operator 
defined on the quotient space $\tilde{A} \eqcl{x} = A x$, with $\eqcl{x}$ the 
equivalence class in $X/\Nu(A)$,
 which is clearly  a well-defined operator.  

\begin{proof}
1.~is classical 
with $B = A^{-1}$. Boundedness of $B$ is 
given by the open mapping theorem. 

Consider 2.: If (HSWP) holds, then $A : X \to \Ra(A)$ is injective and surjective, and 
by the open mapping theorem, a 
continuous right inverse $B: \Ra(A) \to X$ with $B A x = x$ exists. Conversely, if 
such a map exists, then $\Nu(A) = \{0\}$ and convergence of a 
sequence $A x_n \to y$ induces convergence of $x_n = B A x_n \to z$. Hence $y = A z$, and 
the range is closed. 

The equivalence of 3. is proven by Nashed specializing the work 
of Nashed and Votruba \cite{NashedVotruba76}; see \cite[Theorem 7.1]{Nashedinner87}. 

4. If (WP) holds, then by considering $A$ mapping to
the Banach space $\Ra(A)$, we may again use 
 \cite[Theorem 7.1]{Nashedinner87} to show that $B$ exists and is bounded. 
Conversely, if such a $B$ exists, then by the same theorem $\Nu(A)$ must be 
complemented. We show that the range of $A$ is closed: 
We may extend $B$ by continuity to $\overline{\Ra(A)}$. 
Let a sequence satisfy $A x_n \to y$. Then, by continuity 
$B A x_n \to B y = : w$ and  by definition of $B$ as inner inverse $A B A x_n \to y$. 
Hence by continuity of $A$ we have $y = A w$, 
which implies that the range of $A$ is closed.  

Concerning the second conclusion in 5. This is simply the open mapping theorem 
applied to the injective operator $\tilde{A}$. That the range of $A$ is closed 
if $\tilde{A}$ has a bounded inverse follows easily since 
convergence of $A x_n \to y$, implies by  
$A x_n = \tilde{A} \eqcl{x_n}$ convergence of $\tilde{A} \eqcl{x_n}$ and 
thus convergence of $\eqcl{x_n}$. 
Consequently, $y$ is in $\Ra(A) = \Ra(\tilde{A})$. 

Regarding the first equivalence in 5. Assume that such a $B$ exists. Then 
as in the proof of 4. we can conclude that $\Ra(A)$ is closed since linearity 
is not needed there. The opposite direction requires a corollary of the 
Bartle-Graves theorem, Theorem~\ref{BG2}, stated below: Assume that $\Ra(A)$ is closed.  Then $\Ra(A)$ is 
a Banach space and 
$A: X \to \Ra(A)$ is surjective. Hence a continuous right-inverse exists:
$A B(y) = y$ for all $y \in \Ra(A)$. But this is exactly the definition of an
inner inverse: $A B \circ A  = A$.  
\end{proof}

The powerful Bartle--Graves corollary 
that we used in the last part 
reads as follows (cf.~\cite[Corollary 5G.4, p.~298]{DoRo}):
\begin{theorem}\label{BG2}
Let $A:X\to Y$ be a bounded surjective map between Banach spaces $X$ and $Y$. 
Then there is a (in general nonlinear) continuous 
right-inverse $B$, i.e., we have 
\[ A B(y) = y \qquad (\,\forall y \in Y). \] 
\end{theorem}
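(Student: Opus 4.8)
\section*{Proof proposal for Theorem~\ref{BG2}}

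The plan is to run the classical Bartle--Graves argument, whose two ingredients are the open mapping theorem and a partition-of-unity construction. First I would invoke the open mapping theorem: since $A$ is bounded and surjective, there is a constant $c>0$ such that for every $y\in Y$ there exists $x\in X$ with $Ax=y$ and $\|x\|\le c\,\|y\|$. This reduces matters to manufacturing a \emph{continuous} selection rather than merely a pointwise bounded one, and the strategy is to first build a continuous ``approximate right inverse'' and then correct it to an exact one by iteration.

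Second, I would construct a continuous map $\varphi\colon Y\to X$ with $\varphi(0)=0$, $\|\varphi(y)\|\le 2c\,\|y\|$, and $\|y-A\varphi(y)\|\le\tfrac12\|y\|$ for all $y$. It suffices to do this on the unit sphere $S_Y=\{y:\|y\|=1\}$ and extend by positive homogeneity, $\varphi(y):=\|y\|\,\varphi(y/\|y\|)$ for $y\neq 0$. On $S_Y$: for each $y\in S_Y$ pick $x_y\in X$ with $Ax_y=y$ and $\|x_y\|\le 2c$; by continuity of $A$ the set $U_y=\{z\in S_Y:\|z-Ax_y\|<\tfrac12\}$ is an open neighbourhood of $y$, and $\{U_y\}_{y\in S_Y}$ covers $S_Y$. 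Since every metric space is paracompact (Stone's theorem), there is a locally finite continuous partition of unity $\{\psi_i\}$ subordinate to this cover, with each $\psi_i$ supported in some $U_{y(i)}$; set $\varphi(z)=\sum_i\psi_i(z)\,x_{y(i)}$ on $S_Y$. Local finiteness makes this a continuous locally finite sum; the bound $\|\varphi(z)\|\le 2c$ follows from the triangle inequality and $\sum_i\psi_i\equiv1$; and $\|z-A\varphi(z)\|=\|\sum_i\psi_i(z)\,(z-Ax_{y(i)})\|\le\tfrac12$ because $\psi_i(z)\neq0$ forces $z\in U_{y(i)}$.

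Third, I would iterate. Put $r_0(y)=y$ and $r_{n+1}(y)=r_n(y)-A\varphi(r_n(y))$, so inductively $\|r_n(y)\|\le 2^{-n}\|y\|$ and $\|\varphi(r_n(y))\|\le 2c\,2^{-n}\|y\|$. Hence
\[
B(y):=\sum_{n=0}^{\infty}\varphi\big(r_n(y)\big)
\]
converges with $\|B(y)\|\le 4c\,\|y\|$, uniformly on bounded sets; since each partial sum is continuous, $B$ is continuous. Finally, telescoping together with $r_n(y)\to0$ gives $AB(y)=\sum_{n\ge0}A\varphi(r_n(y))=\sum_{n\ge0}\big(r_n(y)-r_{n+1}(y)\big)=r_0(y)=y$, which is the assertion (and in fact yields the stronger statement that $B$ can be chosen positively homogeneous and bounded).

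I expect the main obstacle to be the second step: upgrading the cheap \emph{pointwise} bounded selection supplied by the open mapping theorem to a \emph{continuous} one. This is precisely where paracompactness of the metric space $Y$ (equivalently of $S_Y$) is used, and it is the substantive content of Bartle--Graves; the rest is bookkeeping with a geometric series. An alternative to the explicit partition of unity would be to quote Michael's continuous selection theorem applied to the lower semicontinuous, closed-convex-valued multifunction $y\mapsto A^{-1}(y)\cap\{x:\|x\|\le c\|y\|\}$, but the construction above keeps the proof self-contained.
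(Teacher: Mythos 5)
Your argument is correct. Note that the paper does not prove this statement at all: it is quoted as a known corollary of the Bartle--Graves theorem with a reference to \cite[Corollary 5G.4]{DoRo}. What you have written is a complete, self-contained version of the standard proof of that cited result, and all the steps check out: the open mapping theorem yields the uniform bound $\|x\|\le c\|y\|$ for some preimage; paracompactness of the metric space $S_Y$ (Stone's theorem) gives the locally finite partition of unity needed to turn the pointwise selection into a continuous, positively homogeneous approximate right inverse $\varphi$ with $\|y-A\varphi(y)\|\le\tfrac12\|y\|$; and the geometric-series iteration $r_{n+1}=r_n-A\varphi(r_n)$ telescopes to an exact continuous right inverse $B=\sum_n\varphi\circ r_n$ with $\|B(y)\|\le 4c\|y\|$. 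The only point worth being slightly careful about, which you handle implicitly, is that $\psi_i(z)\neq 0$ places $z$ in the closure of $U_{y(i)}$, so the estimate $\|z-Ax_{y(i)}\|\le\tfrac12$ holds with a non-strict inequality; this is all that is needed. Your proof in fact delivers more than the statement asks for, namely a bounded, positively homogeneous continuous selection, which is the sharper form usually attributed to Bartle--Graves.
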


From an operator point of view, 
the difference between the well-posedness 
and $\Ra$-well-posedness definitions is essentially 
that of whether the problem allows for a 
linear or only a nonlinear continuous solution mapping on its range. The difference to the (NV) concepts 
is that of attainability; in (WP) we only 
consider $y \in \Ra(A)$ as possible data, while 
(NVWP) allows for any $y \in Y$ as data.

As an illustration we may characterize the different well-posedness concepts 
in the following table:
\begin{center}
\begin{tabular}{c|c|c} 
Typ of WP & Range characterization & Operator characterization \\ \hline \hline 
(HaWP) & $\begin{array}{c} \Ra(A) = Y \\ \Nu(A) =
\{0\} \end{array} $ 
& \rule{0mm}{7.8mm}
$\begin{array}{c} 
 \exists  B \in L(Y,X) \\ 
 B A = Id_X \\
 A B = Id_Y 
\end{array}$ \\[2mm] \hline 
(HSWP) & $\begin{array}{c} \Ra(A) \text{ closed}  \\ \Nu(A) = \{0\} \end{array} $
& \rule{0mm}{6.8mm}
$\begin{array}{c} 
 \exists  B \in L(\Ra(A),X) \\ 
 B A = Id_X \\
\end{array}$  \\[2mm]  \hline
(NVWP) & $\begin{array}{c} \Ra(A) \text{ closed}  \\ \Nu(A) \text{ complemented } \\
\Ra(A) 
\text{ complemented }
\end{array} $ & \rule{0mm}{6.8mm}
$\begin{array}{c} 
 \exists B \in L(Y,X) \\ 
 A B A = A \\
\end{array}$  \\[2mm]  \hline
$\begin{array}{c} \text{(WP)} \\
\text{(Def~\ref{def:general})}
\end{array}$
& $\begin{array}{c} \Ra(A) \text{ closed}  \\ \Nu(A) \text{ complemented} 
\end{array} $ & \rule{0mm}{6.8mm}
$\begin{array}{c} 
  \exists  B \in L(\Ra(A),X) \\ 
 A B A = A \\
\end{array}$  \\[2mm]  \hline
$\begin{array}{c} 
\text{\RWP}  \\
\text{(Def~\ref{def:type})}
\end{array}$
& $\begin{array}{c} \Ra(A) \text{ closed} 
\end{array} $ & \rule{0mm}{7.8mm}
$\begin{array}{c} 
  \exists  B: C(\Ra(A),Y) \\
 B \text{ possibly nonlinear }  \\ 
 A B A = A \\
\end{array}$  
\end{tabular}
\end{center}

\subsection{Alternatives concepts of  types of ill-posedness} \label{sec:sub2}
The intuition behind  ``type~I ill-posedness" is 
that an operator contains an 
infinite-dimensional 
{\em well-posed subproblem.} 
As we have seen,  there are various concepts of 
well-posedness,  hence this analogously leads  to 
different concepts of ``type~I ill-posedness", 
depending on what kind of well-posedness we consider 
for the subproblem. Thus, we may introduce a
third alternative type distinction (besides 
those in Definitions~\ref{def:general} and~\ref{def:type})
based on the (NVWP) concept of well-posedness:

\begin{definition}
An  ill-posed  operator $A:X \to Y$ 
is (NV)-type~I ill-posed whenever there exists a 
closed infinite-dimensional subspace $M \subset \Ra(A)$, 
such that 
\[ \Nu(A) \text{ is complemented in $A^{-1}[M]$} \]
and 
\[ M \text{ is complemented in $Y$, }\]
and we call it (NV)-type~II otherwise. 
\end{definition}
Clearly,  (NV)-type I ill-posedness implies our 
type~I ill-posedness definition from Definition~\ref{def:general},
since the complemented range of $M$ is required additionally. Thus, we have the following 
implication within the ill-posed operators: 
\[ \text{(NV)-type I} \subset \text{type I} \subset 
 \text{$\Ra$-type I}.
\]
In particular, 
the class (NV)-type~I ill-posedness does 
not contain all ill-posed operators that are not strictly singular,
but leaves some out. 
Such  operators that are (NV)-type~II ill-posed but of type I 
(not strictly singular) 
(i.e. the set ``$\text{type~I} \setminus \text{(NV)-type~I}$") 
we label as (NV)-hybrid class of operators:

\begin{definition}
 The class of ill-posed operators that 
 are (NV)-type II  but type I 
(non-strictly singular) we call (NV)-hybrid 
operators. 

Equivalently, 
these operators are characterized by the 
fact that there is an infinite-dimensional closed subspace 
$X_1$ such that $A|_{X_1}$ has a bounded inverse on its range but 
where $\Ra(A|_{X_1})$ is not complemented in $Y$. 
Moreover for all infinite-dimensional subspaces spaces $X_1$ which allow for such a bounded 
inverse of $A|_{X_1}$, the subspace  
$\Ra(A|_{X_1})$ is not complemented in $Y$.
\end{definition}

The full picture is now as in Figure~\ref{fig:FHV25}:
\begin{figure}[ht]
\begin{center}
           \begin{tikzpicture}
            \draw[fill={rgb:black,0.5;white,10}, thick]  plot[smooth, tension=.7, fill] coordinates {(-3.5,0.5) (-3,2.5) (-1,3.5) (1.5,3.5)
            (4,3.5) (6,2.5) (6.,-0.1) (4.,-2) (0,-1.9) (-3,-2) (-3.5,0.5)} ;  
                \draw[fill={rgb:black,2.5;white,10}, thick]  plot[smooth, tension=.7, fill] coordinates {(1.5,3.5)
            (4,3.5) (6.5,2.5) (6.5,0.0) (4.5,-2) (0.8,-1.95)    (1.1,0.)  (1.75,1.7)  (1.5,3.5)} ;

            \begin{scope}[shift={(-1,1.5)}]
            \draw[fill={rgb:black,4;white,10},thick]  plot[smooth, tension=.7, fill] coordinates {(2,-0.7) (1.5, 0) (2,1) (3 ,1.3)
            (4 ,1.4)       (5 ,1.4)       (5.5 ,1.0)   (5.5 ,0.4)     (5.3,-0.1)     (4.4,-0.5)     (2.9,-0.75)     (2,-0.7)    
            } ;
            \end{scope}
            
                    \draw[fill={rgb:black,2.5;white,10}, thick]  plot[smooth, tension=.7, fill] coordinates
                    {
               (0.8,-1.75)  (0.55,-1.5)  (0.7,-1.)   (1.15,-0.3)    (2.,-0.5)    (2.,-1.5)    (0.8,-1.75)  
                    };
                    \draw[fill={rgb:black,0.5;white,10}, thick]  plot[smooth, tension=.7, fill] coordinates
                    {
               (0.6,-1.75)  
               (0.55,-1.5) 
                (0.7,-1.)  
                 (0.85,-0.3) 
                    (-0.4,-0.5)  
                      (-0.,-1.5)   
                       (0.6,-1.75)  
                    };
            \draw[thick]
             (1.5,3.5) .. controls (1.8,1.8) and (2.1,2) .. (1.25,0.45) ;
             
            \draw[thick]
             (-3.4,1.5) .. controls (-2.2,1.3) and (-2.0,1.7) .. (1.25,0.45) ;             

 \draw[->,thick]        (3.3,-3)   -- (1.4,-1.55);
 \draw[->,thick]        (1.5,4.5)   -- (1.5,1.7); 
  \node at (1.5,5.2) {\begin{tabular}{c} well posed strictly singular\\ = well-posed compact \\    
                          = finite-dimensional range
                       \end{tabular}
};
 \node at (3.3,-3.5) {{\begin{tabular}{c} strictly singular\\ with $\Ra(A)$ containing\\ closed infinite-dimensional subspace\end{tabular}}};

 \node at (-1.3,-3.5) {{\begin{tabular}{c} 
 Inverse of $A|_{X_{1}}$ exists\\
 (= not strictly singular), \\
 but $\Ra(A|_{X_{1}})$\\
 uncomplemented
 \end{tabular}}};

 \draw[->,thick]        (-1.3,-2.5)   -- (0.3,-1.35);

  \node at (1.4,-1.1) {{\small hybrid}};
    \node at (0.2,-0.7) {{\small (NV)}};
    \node at (0.2,-1.) {{\small hybrid}};
\node at (-0.5,2.1) {\textbf{well-posed}};
\node at (-1.5,0.1) {\textbf{ill-posed of type I}};
\node at (3.9,-.5) {\textbf{ill-posed of type II}};
\node at (-0.2,1.1) [rotate=-15] {operator not};
\node at (-0.8,0.9) [rotate=-15] {strictly singular};
\node at (4.7,0.7) {operator strictly};
\node at (4.9,0.4) {singular};
\node at (2.2,1.7) [rotate=10]  {operator compact};
\end{tikzpicture}        
\caption{Position of hybrid operators.}\label{fig:FHV25}
\end{center}
\end{figure}
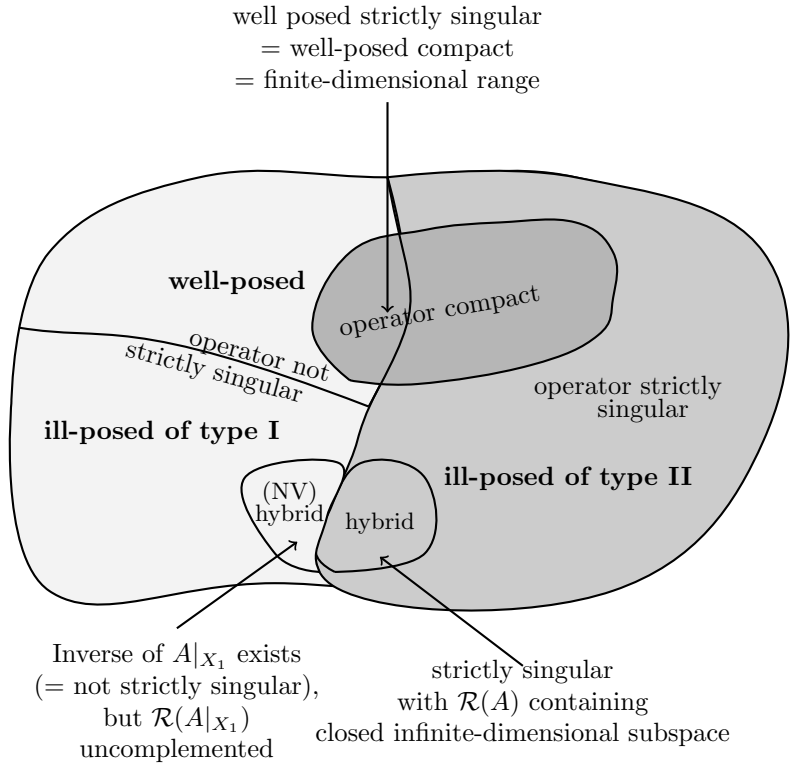
Depending on which choice of type-distinction one 
chooses,  the hybrid cases can be classfied 
differently as type I or type II (or well-posed).

\begin{remark}\rm 
Let us emphasize 
that our terminology of well-/ill-posedness 
and type I/type II is idiosyncratic as there does not 
seem to exist  universally accepted definitions
in the literature. In fact, various authors simply call ``ill-posed" any  of the above ill-posedness concepts without  
a further identifier. 

Since the problem distinction by types is based on a well-posedness concept of a subspace, 
the same holds true for this: depending on what one 
calls well-posed, concrete problems might be attributed 
to different classes. 
Our definition of type~I (cf.~Definition~\ref{def:general}) above is that of Nashed in \cite[Theorem 4.5]{Nashed86}, and it
is also used by Flemming in \cite{Flemmingbuch18}. 
 However, let us mention that Nashed 
 (e.g.~in \cite[Theorem 4.4]{Nashed86}) 
 (preferably) uses the ``(NV)-type I" version. 
On the other hand, in the same article, 
Nashed apparently uses a well-posedness concept (although never explicitly mentioned) based on $\Ra$-well-posedness, which in this combination seems to be slightly inconsequent. 
A similar setup ($\Ra$-well-posedness but 
type~I from Definition~\ref{def:general}) 
is Flemming's  well-posedness concepts in \cite{Flemmingbuch18}. 

In the 
non-injective situations we have three definitions 
of well-posedness (WP, NVWP, $\Ra$-WP), and each of 
this definitions could be paired with a type-distinction 
using one of these definitions
(a la ``contains a (WP, NVWP, $\Ra$-WP) infinite-dimensional subproblem"), 
which would give us a  
3x3 matrix of possible classification 
schemes in the Banach space case, 
where in each situation the borders between types 
and well-posedness in a picture like Figure~\ref{fig:FHV25}
would be different (as for our hybrid cases).

We decided for 
the concept of (WP) from Definition~\ref{def:general}
because of the characterization of both, on the one hand the well-posedness by the existence of a bounded generalized pseudoinverse, and on the other hand  
the equivalence of ill-posed strict singular cases 
with type II problems. Moreover note that our  
type distinction is consistent 
with the well-posedness definition 
and does not 
mix different well-posedness concepts. 
\end{remark}

\begin{remark} \rm
The definition of (NV)-hybrid  operators 
is motivated by Nashed's article, where 
mainly the (NV)-type I  class was considered.  Although the definition 
in \cite[Theorem 4.5]{Nashed86} seems to 
suggest that type I and (NV)-type I  are equivalent, 
this is not the case since in this theorem
the complemented condition on $M$ was forgotten
(but implicitly used) and should be amended there. 
In his paper \cite{Nashed86}, Nashed also states 
that 
``type I is 
 not identical with non-strictly singular", which 
 clearly indicates that he is using the 
 (NV)-type I  definition, and the class of operators 
 which are non-strictly singular but Nashed's type 
 I are precisely our (NV)-hybrid  class. 
\end{remark}

Nashed also gave an example of operators 
that are never (NV)-type I (the proof follows 
\cite[p.~69f]{Nashed86}). 
Compare the next proposition with 
Prop.~\ref{th:pitt}, according to which 
any operator $c_0 \to \ell^p$, $p \in [1,\infty)$,
is never type I: 

\begin{proposition}[Nashed, Schock]\label{propNas}
A bounded operator  $A: c_0 \to \ell^\infty$ is  never (NV)-type I.
\end{proposition}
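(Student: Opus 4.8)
The plan is to argue by contradiction, reducing the (NV)-type~I hypothesis to the assertion that $\ell^\infty$ is isomorphic to a separable Banach space, which is false. Assume $A: c_0 \to \ell^\infty$ is (NV)-type~I ill-posed. Then there is a closed infinite-dimensional subspace $M \subseteq \Ra(A) \subseteq \ell^\infty$ such that $M$ is complemented in $\ell^\infty$ and $\Nu(A)$ is complemented in $X_1 := A^{-1}[M]$. Write $X_1 = \Nu(A) \oplus X_2$ with $X_2$ a closed subspace of $c_0$. As in the proof of Proposition~\ref{pro:iff}, the restriction $A|_{X_2}: X_2 \to M$ is a continuous bijection: it is injective because $X_2 \cap \Nu(A) = \{0\}$, and it maps onto $M$ because $A[X_1] = A[X_2]$ (as $A$ annihilates $\Nu(A)$) while $A[X_1] = A[A^{-1}[M]] = M \cap \Ra(A) = M$. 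By the open mapping theorem $A|_{X_2}$ is therefore an isomorphism, so $X_2 \cong M$.

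Next I would invoke two classical facts. On the one hand, $X_2$ is a closed subspace of the separable space $c_0$, hence separable, so $M \cong X_2$ is separable as well. On the other hand, $M$ is an infinite-dimensional complemented subspace of $\ell^\infty$, and by Lindenstrauss's theorem on complemented subspaces of $\ell^\infty$ (see \cite{LiTz}) every such subspace is isomorphic to $\ell^\infty$; thus $M \cong \ell^\infty$, which is non-separable. This contradicts the separability of $M$. Consequently no admissible $M$ exists: if $A$ is ill-posed this shows it is not (NV)-type~I, and if $A$ is well-posed the claim is vacuous, so in either case $A$ is never (NV)-type~I.

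The crux — essentially the only nontrivial ingredient — is Lindenstrauss's structure theorem for complemented subspaces of $\ell^\infty$; the rest is routine bookkeeping with the definition and the open mapping theorem. Should one wish to bypass that theorem, an equivalent route uses that $c_0$ is $c_0$-saturated together with Sobczyk's and Phillips's theorems: since $M \cong X_2 \subseteq c_0$ is infinite-dimensional it contains a subspace $W$ isomorphic to $c_0$; as $M$ is separable, $W$ is complemented in $M$ by Sobczyk's theorem, and $M$ is complemented in $\ell^\infty$, so $c_0 \cong W$ would be complemented in $\ell^\infty$, contradicting Phillips's theorem. I would present the Lindenstrauss version, since it is the shortest.
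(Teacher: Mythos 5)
Your argument is correct, but it reaches the contradiction by a genuinely different route. The paper composes the bounded inverse $S$ of $A$ restricted to the complement of $\Nu(A)$ with a projection $Q:\ell^\infty\to M$, shows that the resulting projection of $c_0$ onto $\tilde X_1$ forces $\tilde X_1\cong c_0$ (Pe{\l}czy\'nski's theorem on complemented subspaces of $c_0$, \cite[Theorem~2.a.3]{LiTz}), and then uses that every bounded operator $\ell^\infty\to c_0$ is weakly compact together with the Eberlein--Smulian theorem to conclude that $c_0$ would be reflexive. You instead play separability against Lindenstrauss's structure theorem: $M$ is isomorphic to the closed subspace $X_2$ of the separable space $c_0$, hence separable, while every infinite-dimensional complemented subspace of $\ell^\infty$ is isomorphic to $\ell^\infty$ (\cite[Theorem~2.a.7]{LiTz}) and hence non-separable. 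This is shorter, and it actually proves more than the stated proposition: since $M\subset\Ra(A)$ is automatically separable whenever the domain is separable, your first route shows that \emph{no} bounded operator from \emph{any} separable Banach space into $\ell^\infty$ can be (NV)-type~I --- the hypotheses on $\Nu(A)$ and the specific structure of $c_0$ are not needed at all. The price is that Lindenstrauss's theorem is a deeper input than anything in the paper's proof. Your alternative route via Sobczyk and Phillips is closer in spirit to the paper's argument (both ultimately rest on the fact that no copy of $c_0$ is complemented in $\ell^\infty$, equivalently that projections $\ell^\infty\to c_0$ would be weakly compact); note only that you need the standard strengthening of Phillips's theorem to \emph{isomorphic} copies of $c_0$, not just the canonical one. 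All the bookkeeping steps (closedness of $X_2$, bijectivity of $A|_{X_2}$ onto $M$, the open mapping theorem) are handled correctly.
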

\begin{proof}
Let $M$ be an infinite-dimensional subspace of $\Ra(A)$ which is 
complemented in $\ell^\infty$ and where $X_1$ defined 
as $M = A X_1$ has a complement in $A^{-1}[M]$. If such an 
$M$ exists, then $A$ would be (NV)-type I, and we show that $M$
cannot exists. 
Let $\tilde{X}_1$ be the complement of $\Nu(A)$ in $X_1$ such that 
$M = \Ra(A|_{\tilde{X}_1})$ and $A: \tilde{X_1} \to M$ is injective (and surjective).
Thus, $A|_{\tilde{X}_1}$ has an bounded inverse $S:= (A|_{\tilde{X}_1})^{-1}:M\to \tilde{X_1}$ , 
and  $\tilde{X_1}$ must be infinite dimensional (since $M$ is so).  Now since $M$ is complemented, there 
exists a continuous projection $Q: \ell^\infty \to M$. The mapping $S Q A: c_0 \to \tilde{X_1} \subset c_0$ 
is the identity on $\tilde{X_1}$ and hence a continuous projection $c_0 \to \tilde{X_1}$. This means 
that $\tilde{X_1}$ is complemented in $c_0$. It is known \cite[Theorem~2.a.3]{LiTz} that then $\tilde{X_1}$ is isomorphic to 
$c_0$; denote the isomorphism by $i:\tilde{X_1} \to c_0$. It follows that 
$i S Q A c_0 \to c_0$ is surjective since $S Q A$ maps surjective to $\tilde{X_1}$. 
It is known \cite[p.~114]{Die} that any bounded map $\ell^\infty \to c_0$ is weakly compact  and weakly compact operators 
form an ideal, hence $S Q A$ is weakly compact.  
Thus, by the isomorphism $i$, 
any bounded set in $c_0$ is weakly compact. 
By the Eberlein-Smulian theorem \cite[p.~18]{Die}, this is equivalent to $c_0$ being reflexive, which is a contradiction. 
\end{proof}
Thus, such mappings from $c_0 \to \ell^\infty$
are either well-posed or (NV)-type~II. 
For instance the embedding $\mathcal{E}:c_0 \to \ell^\infty$
is injective with closed range $c_0$ and thus 
well-posed by our definition;  
it follows from the proof 
of Proposition~\ref{propNas} that it is (NV)-ill-posed
as $c_0$ cannot be complemented in $\ell^\infty$. 

A similar reasoning allows us to construct an 
(NV)-hybrid operator: 
\begin{example}[(NV)-hybrid] \rm 
Consider the following operator: 
\begin{align*}
 B : c_0 \times \ell^2 &\to \ell^\infty \times \ell^2 \\ 
      (x,y) &\to  (x,A y), 
\end{align*}
where $A:\ell^2 \to \ell^2$ is a compact injective operator. Thus, $B$ is the Cartesian product of 
the embedding operator $\mathcal{E}: c_0 \to \ell^\infty$ and a compact operator $A$ mapping in the Hilbert space $\ell^2$. 
We have that $\Nu(B) = \{0\}$ and $\Ra(B) = (c_0,\Ra(A))$. Since $c_0$ is closed, 
$\Ra(B)$ is closed if and only if $\Ra(A)$ is closed, which is not the case, hence 
$B$ constitutes an ill-posed operator 
by our Definition~\ref{def:general}. 
Since the subspace $(c_0,0)$ allows for 
a continuous inverse on this set, the operator is not strictly singular, thus 
of type~I. We shows that it is  not (NV)-type I: Consider an infinite-dimensional closed 
subspace $M = B(X_1)$,
$X_1 \subset c_0 \times \ell^2$ 
such that $M$ is complemented.
We have $M = (M_1,M_2)$ is closed.   Let $N = (N_1,N_2)$ be the 
topological complement of $M$ in $\ell^\infty \times \ell^2$. Then $N$ is closed and 
this implies that $N_1$ and $N_2$ are closed. Thus, $N_1$ constitutes a 
topological complement of $M_1$ in $\ell^\infty$. We also have to 
show that $M_1$ is infinite dimensional. Since $M$ is so, $M_1$ is 
finite dimensional only if $M_2$ is infinite dimensional. However, 
$M_2$ is a closed subspace in the range of the compact operator $A$
and hence can be at most finite dimensional.
Thus, $M_1$ must be 
infinite dimensional. 

As above this now allows for a continuous 
projection $Q:\ell^\infty \to M_1$, and combining this with the inverse of 
the first component of $B|_{(M_1,0)} \to (M_1,0)$, we arrive at a
weakly compact map to $M_1 \subset c_0$ where $M_1$ is complemented in $c_0$
by the same reasoning as in Proposition~\ref{propNas}.
This again yields the contradiction that $c_0$ is reflexive. Thus, no such subset $X_1$ can 
exist, which means that $B$  cannot be 
of (NV)-type I.  
\end{example}


\begin{thebibliography}{10}

\bibitem{Brezis}
H.~Brezis.
\newblock {\em Functional analysis, {S}obolev spaces and partial differential
  equations}.
\newblock Universitext. Springer, New York, 2011.

\bibitem{Die}
J.~Diestel.
\newblock {\em Sequences and Series in {B}anach Spaces}, volume~92 of {\em
  Graduate Texts in Mathematics}.
\newblock Springer-Verlag, New York, 1984.

\bibitem{DoRo}
A.~L. Dontchev and R.~T. Rockafellar.
\newblock {\em Implicit Functions and Solution Mappings}.
\newblock Springer Series in Operations Research and Financial Engineering.
  Springer, New York, second edition, 2014.
\newblock A view from variational analysis.

\bibitem{Flemmingbuch18}
J.~Flemming.
\newblock {\em Variational Source Conditions, Quadratic Inverse Problems,
  Sparsity Promoting Regularization}.
\newblock Frontiers in Mathematics. Birkh\"{a}user/Springer, Cham, 2018.
\newblock New results in modern theory of inverse problems and an application
  in laser optics.

\bibitem{FHV15}
J.~Flemming, B.~Hofmann, and I.~Veseli\'{c}.
\newblock On {$\ell^1$}-regularization in light of {N}ashed's ill-posedness
  concept.
\newblock {\em Comput. Methods Appl. Math.}, 15(3):279--289, 2015.

\bibitem{Gerth21}
D.~Gerth, B.~Hofmann, C.~Hofmann, and S.~Kindermann.
\newblock The {H}ausdorff moment problem in the light of ill-posedness of
  type~{I}.
\newblock {\em Eurasian Journal of Mathematical and Computer Applications},
  9(2):57--87, 2021.

\bibitem{GoldThorp63}
S.~Goldberg and E~Thorp.
\newblock On some open questions concerning strictly singular operators.
\newblock {\em Proc.~Amer.~Math.~Soc.}, 14:334--336, 1963.

\bibitem{Hadamard23}
J.~Hadamard.
\newblock {\em Lectures on the Cauch Problem in Linear Partial Differential
  Equations}.
\newblock Yale University Press, New Haven, 1923.

\bibitem{HofPla18}
B.~Hofmann and R.~Plato.
\newblock On ill-posedness concepts, stable solvability and saturation.
\newblock {\em J. Inverse Ill-Posed Probl.}, 26(2):287--297, 2018.

\bibitem{HofSch98}
B.~Hofmann and O.~Scherzer.
\newblock Local ill-posedness and source conditions of operator equations in
  {H}ilbert spaces.
\newblock {\em Inverse Problems}, 14(5):1189--1206, 1998.

\bibitem{Ivanov63}
V.~K. Ivanov.
\newblock On ill-posed problems (russian).
\newblock {\em Mat. Sbornik (New Series)}, 61(103):211--223, 1963.

\bibitem{Kato58}
T.~Kato.
\newblock Perturbation theory for nullity, deficiency and other quantities of
  linear operators.
\newblock {\em J. Analyse Math.}, 6:261--322, 1958.

\bibitem{KindHof24}
S.~Kindermann and B.~Hofmann.
\newblock Curious ill-posedness phenomena in the composition of non-compact
  linear operators in {H}ilbert spaces.
\newblock {\em J. Inverse Ill-Posed Probl.}, 32(5):1001--1013, 2024.

\bibitem{LiTz}
J.~Lindenstrauss and L.~Tzafriri.
\newblock {\em Classical {B}anach {S}paces. {I}}, volume~92 of {\em Ergebnisse
  der Mathematik und ihrer Grenzgebiete [Results in Mathematics and Related
  Areas]}.
\newblock Springer-Verlag, Berlin-New York, 1977.
\newblock {S}equence {S}paces.

\bibitem{nmh22}
M.~T. Nair, P.~Mathé, and B.~Hofmann.
\newblock Regularization of linear ill-posed problems involving multiplication
  operators.
\newblock {\em Applicable Analysis}, 101(2):714–732, 2022.

\bibitem{Nashedinner87}
M.~Z. Nashed.
\newblock Inner, outer, and generalized inverses in {B}anach and {H}ilbert
  spaces.
\newblock {\em Numer. Funct. Anal. Optim.}, 9(3-4):261--325, 1987.

\bibitem{Nashed86}
M.~Z. Nashed.
\newblock A new approach to classification and regularization of ill-posed
  operator equations.
\newblock In {\em {I}nverse and {I}ll-posed {P}roblems ({S}ankt {W}olfgang,
  1986), volume~4 of Notes Rep.~Math.~Sci.~Engrg.}, pages 53--75. Academic
  Press, Boston, MA, 1987.

\bibitem{NashedVotruba76}
M.~Z. Nashed and G.~F. Votruba.
\newblock A unified operator theory of generalized inverses.
\newblock In {\em Generalized Inverses and Applications (Proceedings of an
  Advanced Seminar Sponsored by the Mathematics Research Center, the University
  of Wisconsin–Madison, October 8–10, 1973)}, pages 1--109. Academic Press,
  New York, 1976.

\bibitem{WernerHof25}
F.~Werner and B.~Hofmann.
\newblock A unified concept of the degree of ill-posedness for compact and
  non-compact linear operator equations in {H}ilbert spaces under the auspices
  of the spectral theorem.
\newblock {\em Numer. Funct. Anal. Optim.}, 46(4-5):322--347, 2025.

\end{thebibliography}

\end{document}